\tikzset{blackv/.style={circle,fill=black,inner sep=3pt,outer sep=3pt},
         whitev/.style={circle,fill=white,draw=black,inner sep=3pt,outer sep=3pt},
         blabel/.style={circle,draw=black,inner sep=1.5pt,outer sep=0pt},
         redv/.style={circle,fill=red,inner sep=3pt,outer sep=3pt},
         block/.style={draw,rectangle split,rectangle split horizontal,rectangle split parts=#1},
         symbol/.style={
           draw=none,
           every to/.append style={
             edge node={node [sloped, allow upside down, auto=false]{$#1$}}}}
}
\newtheorem{theorem}{Theorem}[section]
\newtheorem{proposition}[theorem]{Proposition}
\newtheorem{corollary}[theorem]{Corollary}
\newtheorem{lemma}[theorem]{Lemma}
\theoremstyle{definition}
\newtheorem{remark}[theorem]{Remark}
\newtheorem{condition}[theorem]{Condition}
\newtheorem{question}[theorem]{Question}
\def\xx{\mathbf{x}}
\def\TT{\mathbb{T}}
\def\ZZ{\mathbb{Z}}
\def\Fcal{\mathcal{F}}
\title[Generalization of Markov Diophantine equation via generalized cluster algebra]{Generalization of Markov Diophantine equation\\via generalized cluster algebra}
\author{Yasuaki Gyoda}
\author{Kodai Matsushita}
\keywords{Diophantine equation, cluster algebra, mutation}
\subjclass[2020]{11D25,13F60}
\address{(Yasuaki Gyoda) Graduate School of Mathematical Sciences, The University of Tokyo, 3-8-1 Komaba Meguro-ku Tokyo 153-8914, Japan}
\email{m17009g@math.nagoya-u.ac.jp}
\address{(Kodai Matsushita) Graduate School of Mathematics, Nagoya University, Chikusa-ku, Nagoya, 464-8602 Japan}
\email{m19043g@math.nagoya-u.ac.jp}
\begin{document}
\begin{abstract}
In this paper, we deal with two classes of Diophantine equations, $x^2+y^2+z^2+k_1yz+k_2zx+k_3xy=(3+k_1+k_2+k_3)xyz$ and $x^2+y^4+z^4+ky^2z^2+2xz^2+2xy^2=(7+k)xy^2z^2$, where $k_1,k_2,k_3,k$ are nonnegative integers. The former is known as the Markov Diophantine equation if $k_1=k_2=k_3=0$, and the latter is a Diophantine equation recently studied by Lampe if $k=0$. We give algorithms to enumerate all positive integer solutions to these equations, and discuss the structures of the generalized cluster algebras behind them.
\end{abstract}
\maketitle
\section{Introduction}
In this paper, we deal with some Diophantine equations. One of equations with which we deal has the following form:
\begin{align}\label{Diophantine}
    x^2+y^2+z^2+k_1yz+k_2zx+k_3xy&=(3+k_1+k_2+k_3)xyz,
\end{align}
where $k_1,k_2,k_3 \in \ZZ_{\geq 0}$.
We describe all positive integer solutions to \eqref{Diophantine} in a combinatorial way. We give a tree $\mathbb T^{k_1,k_2,k_3}$ with triplets of positive integers as its vertices in the following steps.

\begin{itemize}
    \item [(1)] The root vertex is $(1,1,1)$,
    \item[(2)] the triplet $(1,1,1)$ has three children, $(k_1+2,1,1),(1,k_2+2,1),(1,1,k_3+2),$
    \item [(3)] the generation rule below $(k_1+2,1,1),(1,k_2+2,1),(1,1,k_3+2)$ is as follows: 
\begin{itemize}
        \item [(i)] if $a$ is a maximal number in $(a,b,c)$, then $(a,b,c)$ has two children 
        \[\left(a,\dfrac{a^2+k_2ac+c^2}{b},c\right)\text{ and } \left(a,b,\dfrac{a^2+k_3ab+b^2}{c}\right),\] 
        \item [(ii)] if $b$ is a maximal number in $(a,b,c)$, then $(a,b,c)$ has two children
        \[\left(\dfrac{b^2+k_1bc+c^2}{a},b,c\right)\text{ and } \left(a,b,\dfrac{a^2+k_3ab+b^2}{c}\right),\] 
        \item [(iii)] if $c$ is a maximal number in $(a,b,c)$, then $(a,b,c)$ has two children 
        \[\left(\dfrac{b^2+k_1bc+c^2}{a},b,c\right)\text{ and } \left(a,\dfrac{a^2+k_2ac+c^2}{b},c\right).\] 
\end{itemize}
\end{itemize}
When $k_1=1,k_2=2,k_3=0$, the first few terms of $\TT^{1,2,0}$ are as follows:
\begin{align}\label{tree}
\begin{xy}(0,0)*+{(1,1,1)}="0",(20,20)*+{(3,1,1)}="1",(20,0)*+{(1,4,1)}="1'",(20,-20)*+{(1,1,2)}="1''",(45,50)*+{(3,16,1)}="20",(45,30)*+{(3,1,10)}="21",(45,10)*+{(21,4,1)}="22",(45,-10)*+{(1,4,17)}="23",(45,-30)*+{(7,1,2)}="24",(45,-50)*+{(1,9,2)}="25",(80,55)*+{(91,16,1)\cdots}="40",(80,45)*+{(3,16,265)\cdots}="41", (80,35)*+{(37,1,10)\cdots}="42", (80,25)*+{(3,169,10)\cdots}="43", (80,15)*+{(21,121,1)\cdots}="44", (80,5)*+{(21,4,457)\cdots}="45", (80,-5)*+{(373,4,17)\cdots}="46", (80,-15)*+{(1,81,17)\cdots}="47", (80,-25)*+{(7,81,2)\cdots}="48", (80,-35)*+{(7,1,25)\cdots}="49", (80,-45)*+{(103,9,2)\cdots}="410", (80,-55)*+{(1,9,41)\cdots}="411", \ar@{-}"0";"1"\ar@{-}"0";"1'"\ar@{-}"0";"1''"\ar@{-}"1";"20"\ar@{-}"1";"21"\ar@{-}"1'";"22"\ar@{-}"1'";"23"\ar@{-}"1''";"24"\ar@{-}"1''";"25"\ar@{-}"20";"40"\ar@{-}"20";"41"\ar@{-}"21";"42"\ar@{-}"21";"43"\ar@{-}"22";"44"\ar@{-}"22";"45"\ar@{-}"23";"46"\ar@{-}"23";"47"\ar@{-}"24";"48"\ar@{-}"24";"49"\ar@{-}"25";"410"\ar@{-}"25";"411"
\end{xy}.
\end{align}

The first main result is the following theorem:

\begin{theorem}\label{Diophantinetheorem}
Every positive integer solution to \eqref{Diophantine} appears exactly once in $\mathbb T^{k_1,k_2,k_3}$.
\end{theorem}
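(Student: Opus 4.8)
The plan is to run the classical Markov descent (``Vieta jumping'') argument, adapted to the coefficients $k_1,k_2,k_3$; the cluster-algebraic picture is not needed for this particular statement. \emph{Step 1 (mutations preserve solutions).} Fixing $b,c$, equation \eqref{Diophantine} becomes the monic quadratic $a^2-\big((3+k_1+k_2+k_3)bc-k_1b-k_3c\big)a+(b^2+k_2bc+c^2)=0$ in $a$; if $a$ is one root, the other is $a'=(b^2+k_2bc+c^2)/a$, which is a positive integer (a positive rational by the product of roots, an integer by the sum of roots). This is exactly the replacement $a\mapsto a'$ appearing in the generation rules of $\TT^{k_1,k_2,k_3}$, and similarly for the other two coordinates. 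Since a direct check shows that $(1,1,1)$ and its three children solve \eqref{Diophantine}, induction on depth shows that every vertex of $\TT^{k_1,k_2,k_3}$ is a positive integer solution.

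\emph{Step 2 (fundamental inequality and monotonicity).} I would prove: if $(a,b,c)$ solves \eqref{Diophantine} and $a$ is strictly the largest coordinate, then the $a$-mutation satisfies $a'=(b^2+k_2bc+c^2)/a<a$, while the other two mutations strictly enlarge the mutated coordinate (immediate, since replacing $b$ by $(a^2+k_3ac+c^2)/b$ enlarges it as $a^2>b^2$, and likewise for $c$). For the first claim, let $g(a)=a^2-La+(b^2+k_2bc+c^2)$ be the quadratic above, with roots $a,a'$, and assume $b\ge c$. A short computation gives $g(b)=(2+k_1)b^2+(k_2+k_3)bc+c^2-(3+k_1+k_2+k_3)b^2c$; dividing by $b^2$ and using $0<c/b\le 1\le c$ yields $g(b)\le 0$, with equality only when $b=c=1$. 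As $g$ is an upward parabola, $g(b)<0$ forces $a'<b<a$; the excluded case $b=c=1$ (where $a=k_2+2$ and $a'=1$) is checked by hand. It follows that along each edge of $\TT^{k_1,k_2,k_3}$ the sum $x+y+z$ strictly increases from parent to child, because the two children of a vertex are obtained by mutating its two non-maximal coordinates.

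\emph{Step 3 (surjectivity and ``exactly once'').} First, using Step 2 and finite descent on the sum, I would show that a positive integer solution with two equal coordinates is either $(1,1,1)$ or a coordinate-permutation of $(k_2+2,1,1),(1,k_3+2,1),(1,1,k_1+2)$ --- and in each of these the largest coordinate is the non-repeated one. Hence any solution $\ne(1,1,1)$ has a unique largest coordinate, and by Step 2 the mutation there is the unique move that decreases $x+y+z$; iterating it produces a strictly decreasing sequence of sums, hence a finite descent chain ending at the only solution admitting no such move, namely $(1,1,1)$. Read backwards, this chain realizes the solution as a vertex of $\TT^{k_1,k_2,k_3}$, proving surjectivity. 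For injectivity I would induct on depth: if distinct vertices $v,w$ carried the same solution $s$, their parents would carry the uniquely determined predecessor of $s$ in its descent chain, hence coincide by induction; but the children of a fixed vertex carry pairwise distinct solutions --- a contradiction.

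I expect the main obstacle to be Step 2: establishing the fundamental inequality (and thereby the ``two equal coordinates'' lemma) with the cross-terms $k_1xy,k_2yz,k_3zx$ present. The convexity estimate above closes exactly because these coefficients are nonnegative, but one must track the asymmetry carefully, since \eqref{Diophantine} is invariant only under coordinate permutations accompanied by a matching permutation of $(k_1,k_2,k_3)$ --- which is also why the three cases (i)--(iii) of the generation rule involve different $k_i$.
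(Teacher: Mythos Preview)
Your overall strategy matches the paper's: establish that mutations preserve solutions (your Step~1 via Vieta's formulas is in fact tidier than the paper's brute-force expansion in Proposition~2.1), prove the fundamental inequality (your Step~2 is Proposition~2.3; the divide-by-$b^2$ estimate is a pleasant alternative to the paper's partial-derivative argument), and then run descent. The genuine gap is in Step~3, namely your plan to deduce the classification of solutions with a repeated coordinate ``using Step~2 and finite descent on the sum.''

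Step~2 applies only when there is a \emph{strictly} largest coordinate. Suppose instead that the repeated entry is maximal, say $a<b=c$ with $b\ge 2$. Then no mutation decreases the sum: mutating the second or third coordinate gives $(a^{2}+k_{3}ab+b^{2})/b>b$ and $(a^{2}+k_{1}ab+b^{2})/b>b$, while mutating the first gives $a'=(2+k_{2})b^{2}/a\ge 2b^{2}/a>b>a$. Such a triple would be a local minimum of $x+y+z$ other than $(1,1,1)$, and descent can neither reach it from $(1,1,1)$ nor leave it. Put differently, your inequality $g(b)<0$ (valid whenever $b=c>1$) only says that $b$ lies strictly between the two roots $a$ and $a'$; it does not force the \emph{given} first coordinate to be the larger root, so when $a<b$ the move $a\mapsto a'$ goes \emph{up}. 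Hence the ``two equal coordinates'' lemma cannot be extracted from Step~2 alone. The paper proves it separately (Lemma~2.2): setting two coordinates equal turns \eqref{Diophantine} into a quadratic in the third whose discriminant must be a perfect square, and a short factoring argument then forces the repeated value to be $1$. Once you insert such a direct argument, your Steps~2--3 go through as written.

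(A minor point of phrasing: the triples $(k_{2}+2,1,1)$, $(1,k_{3}+2,1)$, $(1,1,k_{1}+2)$ are not coordinate-permutations of one another for general $k_i$; the singular solutions are exactly these three together with $(1,1,1)$.)
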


When $k_1=k_2=k_3=0$, the equation \eqref{Diophantine} is the \emph{Markov Diophantine equation}
\begin{align}\label{Diophantine2}
    x^2+y^2+z^2=3xyz.
\end{align}
This is an equation that has received much attention since the work on the \emph{Markov spectrum}, and is now being studied in relation to combinatorial objects such as \emph{Christoffel words}, \emph{perfect matchings} of graphs, and \emph{continuous fractions} (for detail, see Aigner's book \cite{aig}). The proof of Theorem \ref{Diophantinetheorem} for the case of $k_1=k_2=k_3=0$ is known, for example, by \cite{aig}*{Section 3.1}.

Moreover, when $k_1=k_2=k_3=1$, the equation \eqref{Diophantine} is a Diophantine equation
\begin{align}\label{Diophantine3}
    (x+y)^2+(y+z)^2+(z+x)^2=12xyz.
\end{align}
studied in \cite{gyo21}. The positive integer solutions to this equation, as well as the Markov equation, have been shown to be closely related to perfect matchings of graphs and continuous fractions. The specialized version of Theorem \ref{Diophantinetheorem} for the case of $k_1=k_2=k_3=1$ is proved by \cite{gyo21}*{Theorem 1.1}.

Furthermore, Lampe proved specialized version of Theorem \ref{Diophantinetheorem} for the case of $k_1=0,k_2=k_3=2$ in \cite{lampe}*{Lemma 2.7}, that is, the description of all positive integer solutions to 
\begin{align}\label{Diophantine7}
    x^2+y^2+z^2+2xy+2zx=7xyz.
\end{align}
In \cite{lampe}, this theorem is used to describe all positive integer solutions to 
\begin{align}\label{Diophantine8}
    x^2+y^4+z^4+2zx^2+2xy^2=7xy^2z^2.
\end{align}

In this paper, we also deal with the generalized version of the equation \eqref{Diophantine8},
\begin{align}\label{Diophantine4}
    x^2+y^4+z^4+ky^2z^2+2xz^2+2xy^2=(7+k)xy^2z^2,
\end{align}
where $k\in \ZZ_{\geq 0}$.

As in \eqref{Diophantine}, we describe all positive integer solutions to \eqref{Diophantine4} in a combinatorial way. We give a tree $\mathbb T^k$ with triplets of positive integers as its vertices in the following steps.

\begin{itemize}
    \item [(1)] The root vertex is $(1,1,1)$,
    \item[(2)] the triplet $(1,1,1)$ has three children, $(k+2,1,1),(1,2,1),(1,1,2),$
    \item [(3)] the generation rule below $(k+2,1,1),(1,2,1),(1,1,2)$ is as follows:
\begin{itemize}
        \item [(i)] if $a$ is a maximal number in $(a,b^2,c^2)$, then $(a,b,c)$ has two children 
        \[\left(a,\dfrac{a+c^2}{b},c\right)\text{ and } \left(a,b,\dfrac{a+b^2}{c}\right),\] 
        \item [(ii)] if $b^2$ is a maximal number in $(a,b^2,c^2)$, then $(a,b,c)$ has two children
        \[\left(\dfrac{b^4+kb^2c^2+c^4}{a},b,c\right)\text{ and } \left(a,b,\dfrac{a+b^2}{c}\right),\] 
        \item [(iii)] if $c^2$ is a maximal number in $(a,b^2,c^2)$, then $(a,b,c)$ has two children 
        \[\left(\dfrac{b^4+kb^2c^2+c^4}{a},b,c\right)\text{ and } \left(a,\dfrac{a+c^2}{b},c\right).\]  
\end{itemize}
\end{itemize}
When $k=1$, the first few terms of $\TT^1$ are as follows:
\begin{align}\label{tree2}
\begin{xy}(0,0)*+{(1,1,1)}="0",(20,20)*+{(3,1,1)}="1",(20,0)*+{(1,2,1)}="1'",(20,-20)*+{(1,1,2)}="1''",(45,50)*+{(3,4,1)}="20",(45,30)*+{(3,1,4)}="21",(45,10)*+{(21,2,1)}="22",(45,-10)*+{(1,2,5)}="23",(45,-30)*+{(21,1,2)}="24",(45,-50)*+{(1,5,2)}="25",(80,55)*+{(91,4,1)\cdots}="40",(80,45)*+{(3,4,19)\cdots}="41", (80,35)*+{(91,1,4)\cdots}="42", (80,25)*+{(3,19,4)\cdots}="43", (80,15)*+{(21,11,1)\cdots}="44", (80,5)*+{(21,2,25)\cdots}="45", (80,-5)*+{(741,2,5)\cdots}="46", (80,-15)*+{(1,13,5)\cdots}="47", (80,-25)*+{(21,1,11)\cdots}="48", (80,-35)*+{(21,25,2)\cdots}="49", (80,-45)*+{(741,5,2)\cdots}="410", (80,-55)*+{(1,5,13)\cdots}="411", \ar@{-}"0";"1"\ar@{-}"0";"1'"\ar@{-}"0";"1''"\ar@{-}"1";"20"\ar@{-}"1";"21"\ar@{-}"1'";"22"\ar@{-}"1'";"23"\ar@{-}"1''";"24"\ar@{-}"1''";"25"\ar@{-}"20";"40"\ar@{-}"20";"41"\ar@{-}"21";"42"\ar@{-}"21";"43"\ar@{-}"22";"44"\ar@{-}"22";"45"\ar@{-}"23";"46"\ar@{-}"23";"47"\ar@{-}"24";"48"\ar@{-}"24";"49"\ar@{-}"25";"410"\ar@{-}"25";"411"
\end{xy}.
\end{align}

The second main result is the following theorem:

\begin{theorem}\label{Diophantinetheorem2}
Every positive integer solution to \eqref{Diophantine4} appears exactly once in $\mathbb T^k$.
\end{theorem}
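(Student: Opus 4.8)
The strategy is to deduce Theorem~\ref{Diophantinetheorem2} from Theorem~\ref{Diophantinetheorem} by the change of variables that turns \eqref{Diophantine4} into \eqref{Diophantine}. Substituting $(X,Y,Z)=(x,y^{2},z^{2})$ into \eqref{Diophantine} with $(k_1,k_2,k_3)=(2,k,2)$ reproduces \eqref{Diophantine4} verbatim, since then $3+k_1+k_2+k_3=7+k$, $k_1XY+k_3ZX=2xy^{2}+2xz^{2}$, and $k_2YZ=ky^{2}z^{2}$. As squaring is injective on $\ZZ_{>0}$, the map $(x,y,z)\mapsto(x,y^{2},z^{2})$ is a bijection from the set of positive integer solutions of \eqref{Diophantine4} onto the set of positive integer solutions $(X,Y,Z)$ of the $(2,k,2)$-case of \eqref{Diophantine} whose second and third components are perfect squares. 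It therefore suffices to identify $\TT^{k}$ with the corresponding part of $\TT^{2,k,2}$ under this map.

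To this end I would prove that $\varphi\colon(a,b,c)\mapsto(a,b^{2},c^{2})$ underlies an isomorphism of rooted trees $\TT^{k}\xrightarrow{\ \sim\ }\TT^{2,k,2}$, by induction on the depth of a vertex. In depths $0$ and $1$ this is the direct observation that $\varphi$ sends $(1,1,1)$ and its three children $(k+2,1,1),(1,2,1),(1,1,2)$ to $(1,1,1)$ and its three children $(k+2,1,1),(1,4,1),(1,1,4)$. For the inductive step, suppose a depth-$n$ vertex $(a,b,c)$ of $\TT^{k}$, with positive integer entries, corresponds to the depth-$n$ vertex $(a,b^{2},c^{2})$ of $\TT^{2,k,2}$. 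Because $k_1=k_3=2$, the numerator used by the rules of $\TT^{2,k,2}$ to modify the second coordinate at $(a,b^{2},c^{2})$ is $a^{2}+2ac^{2}+c^{4}=(a+c^{2})^{2}$, the one modifying the third coordinate is $a^{2}+2ab^{2}+b^{4}=(a+b^{2})^{2}$, and the one modifying the first coordinate is $b^{4}+kb^{2}c^{2}+c^{4}$; moreover the clause deciding which modifications produce children depends only on the ordering of $a,b^{2},c^{2}$, which is exactly the clause used in the rules of $\TT^{k}$. Since $\TT^{2,k,2}$ is a tree of positive integer triples, $(a+c^{2})^{2}/b^{2}$, $(a+b^{2})^{2}/c^{2}$ and $(b^{4}+kb^{2}c^{2}+c^{4})/a$ are positive integers, and since $\beta^{2}\mid n^{2}$ implies $\beta\mid n$ for positive integers, $(a+c^{2})/b$ and $(a+b^{2})/c$ are positive integers too. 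Hence the rules of $\TT^{k}$ produce honest positive integer triples at $(a,b,c)$, and applying $\varphi$ to them yields precisely the children of $(a,b^{2},c^{2})$ in $\TT^{2,k,2}$, which completes the induction. In particular $\TT^{k}$ is a well-defined tree of positive integer triples, every vertex of $\TT^{2,k,2}$ has perfect-square second and third components, and $\varphi$ is a bijection of vertex sets --- injective because squaring is injective on $\ZZ_{>0}$, and surjective because it sends the root to the root and the children of each vertex onto the children of its image.

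It then remains only to assemble these facts. By Theorem~\ref{Diophantinetheorem} applied to $(k_1,k_2,k_3)=(2,k,2)$, every positive integer solution of the $(2,k,2)$-case of \eqref{Diophantine} appears exactly once as a vertex of $\TT^{2,k,2}$; by the previous paragraph all such solutions have perfect-square second and third components, so via the substitution of the first paragraph they are precisely the triples $(x,y^{2},z^{2})$ with $(x,y,z)$ a positive integer solution of \eqref{Diophantine4}. Transporting this conclusion along the isomorphism $\varphi$ shows that every positive integer solution of \eqref{Diophantine4} appears exactly once as a vertex of $\TT^{k}$, which is Theorem~\ref{Diophantinetheorem2}.

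I expect essentially all the content to lie in the inductive step of the second paragraph, and in particular in making it precise enough to conclude that $\varphi$ is \emph{onto} $\TT^{2,k,2}$, equivalently that every vertex of $\TT^{2,k,2}$ --- and not merely those already known to be of the form $(a,b^{2},c^{2})$ --- has perfect-square second and third components; this is what makes the reduction to Theorem~\ref{Diophantinetheorem} lossless. One must also check that the degenerate configurations in which two of $a,b^{2},c^{2}$ coincide, so that more than one branch of the generation rule formally applies, are handled consistently on both sides. Both issues are resolved by the induction together with the elementary valuation fact $\beta^{2}\mid n^{2}\Rightarrow\beta\mid n$, which is the only genuinely number-theoretic ingredient needed beyond Theorem~\ref{Diophantinetheorem}.
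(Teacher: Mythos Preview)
Your proposal is correct and follows essentially the same route as the paper: reduce \eqref{Diophantine4} to the $(k_1,k_2,k_3)=(2,k,2)$ case of \eqref{Diophantine} via $(x,y,z)\mapsto(x,y^{2},z^{2})$, show inductively that every vertex of $\TT^{2,k,2}$ has perfect-square second and third coordinates using the identity $a^{2}+2ac^{2}+c^{4}=(a+c^{2})^{2}$ together with $\beta^{2}\mid n^{2}\Rightarrow\beta\mid n$, and then transport Theorem~\ref{Diophantinetheorem} across the resulting bijection. The paper packages the same ingredients as Lemma~\ref{specialization2k2} and Proposition~\ref{equivalent-condition}; your explicit formulation of $\varphi$ as a rooted-tree isomorphism is a slightly more structural phrasing of the same argument.
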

This is a generalization of Lampe's result \cite{lampe}*{Theorem 2.6}. In Lampe's paper, it is showed the case $k=0$ as mentioned above. To prove it, the $k_1=0,k_2=k_3=2$ case of Theorem \ref{Diophantinetheorem} is used in his paper. In the proof of Theorem \ref{Diophantinetheorem2}, we use the $k_1=k,k_2=k_3=2$ case of Theorem \ref{Diophantinetheorem}, which is its generalization.

The methods of enumerating the positive integer solutions to the equations mentioned above have one thing in common: it has a structure that can generate three another positive integer solutions from one positive integer solution. This operation is called the \emph{Vieta jumping} and is the key operation of the two main theorems given in this paper. This paper also explains that these Vieta jumpings and positive integer solutions have a structure derived from \emph{cluster algebra theory}. Cluster algebra is a class of commutative algebra generated by \emph{cluster variables}, introduced by \cite{fzi} in early 2000. The cluster variables refer to the all variables obtained sequentially by an oparation called  \emph{mutation}, and it is known that the combinatorial structure of the cluster variable and the mutation appears in various fields such as Teichm{\"u}ller theory, Poisson geometry, representation theory of quiver, gauge theory, knot theory, etc. Cluster algebra theory is also closely related to number theory. Immediately after the birth of cluster algebra, it is shown that the integer sequences called \emph{Somos-4} and \emph{Somos-5} can be seen as cluster variables, and the recurrence formula that gives it can be seen as a specialization of mutation (see \cite{fzl}). In the context of the Diophantine problem, it was first known that Vieta jumpings and positive integer solutions of the Markov Diophantine equation \eqref{Diophantine2} are a specialization of a class of mutations and cluster variables (for example, there is a description of it in \cite{fzw}), and then it was found by \cite{lampe} that those of equation \eqref{Diophantine8} are given by a specialization of another class of mutation and cluster variables. Recently, it was found by \cite{gyo21} that those of the equation \eqref{Diophantine3} can be given as a specialization of the mutation and cluster variable associated with the \emph{generalized cluster algebra}. In this paper, we will discuss the generalized cluster algebra structure of equations \eqref{Diophantine} and \eqref{Diophantine4}, including all of the above mentioned.

At the end of this paper, we will consider whether there are any other Diophantine equations with the structure of a generalized cluster algebra like these equations.

\subsection*{Acknowledgments}
The authors would like to thank Tomoki Nakanishi for checking their paper. The authors would like to thank Michael Somos for helpful comments. The authors would like to thank Matvey Zhuravlev for pointing out and correcting a mistake in the proof of Proposition \ref{nonsingular-induction}. This work was supported by JSPS KAKENHI Grant number JP20J12675.
\section{Proof of Theorem \ref{Diophantinetheorem} and its corollaries}
We will prove the first main theorem, Theorem \ref{Diophantinetheorem}. We begin with the following proposition:
\begin{proposition}\label{inductive-solution}
If $(x,y,z)=(a,b,c)$ is a positive integer solution to \eqref{Diophantine}, then so are $\left(\dfrac{b^2+k_1bc+c^2}{a},b,c\right),\left(a,\dfrac{a^2+k_2ac+c^2}{b},c\right)$, and $\left(a,b,\dfrac{a^2+k_3ab+b^2}{c}\right)$.
\end{proposition}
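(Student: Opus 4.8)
The plan is to prove this by \emph{Vieta jumping}: for each coordinate, fix the other two variables and read \eqref{Diophantine} as a \emph{monic} quadratic in the remaining variable, then extract information from its two roots.

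First I would treat the variable $x$. Substituting $y=b$ and $z=c$ into \eqref{Diophantine} and collecting powers of $x$ shows that $x=a$ is a root of
\[
X^2 - \big((3+k_1+k_2+k_3)bc - k_1 b - k_3 c\big)X + \big(b^2 + k_2 bc + c^2\big) = 0.
\]
Let $a'$ be the other root. Vieta's formulas give $a + a' = (3+k_1+k_2+k_3)bc - k_1 b - k_3 c$ and $a a' = b^2 + k_2 bc + c^2$. From the first relation, $a'$ is a difference of integers, hence $a'\in\ZZ$; from the second, $a' = (b^2 + k_2 bc + c^2)/a$, which is strictly positive since $b,c>0$. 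Therefore $a'$ is a positive integer, and because it is a root of the quadratic above, the triple $\big((b^2+k_2bc+c^2)/a,\, b,\, c\big)$ is again a positive integer solution of \eqref{Diophantine}.

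The remaining two claims follow by the same computation applied to $y$ and to $z$. Reading \eqref{Diophantine} as a monic quadratic in $y$ (with $x=a$, $z=c$ fixed), the two roots are $b$ and $(a^2 + k_3 ac + c^2)/b$, with integer sum $(3+k_1+k_2+k_3)ac - k_1 a - k_2 c$; reading it as a monic quadratic in $z$ (with $x=a$, $y=b$ fixed), the two roots are $c$ and $(a^2 + k_1 ab + b^2)/c$, with integer sum $(3+k_1+k_2+k_3)ab - k_2 b - k_3 a$. In each case the new root is a positive rational by the product relation and an integer by the sum relation, so the corresponding triple is a positive integer solution.

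I do not expect a genuine obstacle here; the one point to get right is that the coefficient of each of $x^2$, $y^2$, $z^2$ in \eqref{Diophantine} is $1$, so the relevant quadratics are monic and Vieta's product formula yields the jumped value directly as the stated fraction (with no denominator to clear), while the sum formula makes integrality transparent. It also remains only to note that the jumped value is nonzero, which is immediate since $b^2+k_2bc+c^2>0$ and likewise for the other two expressions.
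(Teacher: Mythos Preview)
Your argument is correct and is in fact cleaner than the paper's. Both you and the paper establish integrality the same way, via the linear expression $a' = (3+k_1+k_2+k_3)bc - a - k_1b - k_3c$ coming from the sum of the roots. The difference is in checking that the new triple satisfies \eqref{Diophantine}: the paper sets $A=a'$ and verifies $A^2+b^2+c^2+k_1Ab+k_2bc+k_3cA=(3+k_1+k_2+k_3)Abc$ by a full polynomial expansion of both sides, whereas you observe that $a'$ is by construction the second root of the monic quadratic obtained from \eqref{Diophantine} with $y=b$, $z=c$, so there is nothing to verify. Your route avoids the page of expansion entirely; the paper's route is more hands-on but gains nothing beyond what Vieta already gives.
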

\begin{proof}
We prove only that $\left(\dfrac{b^2+k_1bc+c^2}{a},b,c\right)$ is a positive solution. The positivity is clear. We will show that $\left(\dfrac{b^2+k_1bc+c^2}{a},b,c\right)$ is an integer solution to \eqref{Diophantine}. Since $(a,b,c)$ is a solution to \eqref{Diophantine}, we have 
\[\dfrac{b^2+k_1bc+c^2}{a}=(3+k_1+k_2+k_3)bc-a-k_3b-k_2c.\]
Therefore, $\left(\dfrac{b^2+k_1bc+c^2}{a},b,c\right)$ is an integer triplet. For the sake of visibility, we set \[A:=(3+k_1+k_2+k_3)bc-a-k_3b-k_2c.\]
We will show $(A,b,c)$ is a solution to \eqref{Diophantine}. 
The sum and the product of $a$ and $A$ are
\begin{align*}
     &a + A = (3 + k_1 + k_2 + k_3)bc - k_3 b - k_2 c, \\
     &a \cdot A = b^2 + k_1 bc + c^2, 
\end{align*} 
respectively. By Vieta's formula, $a$ and $A$ are the solutions of the quadratic equation
\begin{align*}
    X^2 - \{(3+ k_1 + k_2 + k_3)bc - k_3 b - k_2 c\} X + b^2 + k_1 bc + c^2 =0. 
\end{align*}
Substituting $X=A$ and transfering some terms to the other side, we have
\begin{align*}
   A^2 + b^2 + c^2 + k_3 A b + k_1 b c + k_2 c A=(3 + k_3 + k_1 + k_2) A b c.
\end{align*}
This is an equality substituting $x=A=\dfrac{b^2+k_1bc+c^2}{a},y=b,z=c$ in \eqref{Diophantine}. 
\end{proof}
We call the operation $(a,b,c)\mapsto \left(\dfrac{b^2+k_1bc+c^2}{a},b,c\right)$ the \emph{first Vieta jumping}, $(a,b,c)\mapsto\left(a,\dfrac{a^2+k_2ac+c^2}{b},c\right)$ the \emph{second Vieta jumping}, and $(a,b,c)\mapsto \left(a,b,\dfrac{a^2+k_3ab+b^2}{c}\right)$ the \emph{third Vieta jumping}. We note that the Vieta jumping are involutions. The next step is to determine the solutions that contain two or more of the same number.
\begin{lemma}\label{singular}
In the positive integer solutions to \eqref{Diophantine}, the only solutions that contain repeated numbers are $(1,1,1),(k_1+2,1,1),(1,k_2+2,1)$ and $(1,1,k_3+2)$.
\end{lemma}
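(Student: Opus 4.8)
The plan is to reduce Lemma~\ref{singular} to a single coincidence pattern and then run a short divisibility descent. The first thing I would record is that simultaneously permuting the variables $(x,y,z)$ and permuting $(k_1,k_2,k_3)$ in the matching way is a symmetry of \eqref{Diophantine}; consequently the three patterns ``$x=y$'', ``$y=z$'', ``$z=x$'' are interchangeable (and their common refinement $x=y=z$ is covered by any one of them), and since ``two or more entries equal'' means one of these three patterns holds, it suffices to determine all solutions with $x=y$. I claim these are exactly $(1,1,1)$ and $(1,1,k_1+2)$; the patterns $y=z$ and $z=x$ will then produce $(k_2+2,1,1)$ and $(1,k_3+2,1)$, which is the asserted list.

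Setting $y=x$ in \eqref{Diophantine} gives
\[(2+k_1)x^2+z^2+(k_2+k_3)xz=(3+k_1+k_2+k_3)\,x^2z.\]
The crux, which I expect to be the only nonformal step, is to prove $x\mid z$. Reducing this identity modulo $x$ only yields $x\mid z^2$, which is not enough, so instead I would normalize: put $g=\gcd(x,z)$, write $x=g\bar x$ and $z=g\bar z$ with $\gcd(\bar x,\bar z)=1$, divide the identity by $g^2$ (legitimate since $g^2$ divides every term), and reduce the result modulo $\bar x$. What remains is $\bar z^{\,2}\equiv 0\pmod{\bar x}$, and since $\gcd(\bar x,\bar z)=1$ this forces $\bar x=1$, i.e. $x\mid z$. (One could instead try to extract partial divisibilities from the Vieta jumpings of $(x,x,z)$ via Proposition~\ref{inductive-solution}, but those only give $x\mid z^2$ and $z\mid(2+k_1)x^2$; the $\gcd$ normalization is the clean route.)

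Once $x\mid z$ is in hand the rest is forced. Writing $z=xe$ with $e\in\ZZ_{\ge1}$ and dividing the identity by $x^2$ turns it into $e^2+(k_2+k_3)e+(2+k_1)=(3+k_1+k_2+k_3)\,xe$; reducing modulo $e$ shows $e\mid 2+k_1$, so $2+k_1=ef$ for some $f\in\ZZ_{\ge1}$. Substituting $k_1=ef-2$ and dividing through by $e$ collapses the identity to
\[x=\frac{e+f+k_2+k_3}{ef+k_2+k_3+1}.\]
Since $x\ge1$ this forces $e+f\ge ef+1$, i.e. $(e-1)(f-1)\le 0$, hence $(e-1)(f-1)=0$ as $e,f\ge1$ (with $e=f=1$ impossible because $2+k_1\ge2$). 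If $e=1$ then $z=x$ and the formula gives $x=1$, so $(x,y,z)=(1,1,1)$; if $f=1$ then $e=2+k_1$, the formula again gives $x=1$, and $z=xe=k_1+2$, so $(x,y,z)=(1,1,k_1+2)$. Running the identical computation for the patterns $y=z$ and $z=x$ (which merely permutes the roles of $k_1,k_2,k_3$) yields $(k_2+2,1,1)$ and $(1,k_3+2,1)$, and together with $(1,1,1)$ this exhausts the list. The main obstacle is thus concentrated entirely in the line ``$x\mid z$''; everything before and after it is routine.
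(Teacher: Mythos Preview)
Your proof is correct, and the divisibility argument is clean. The route, however, is genuinely different from the paper's.

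The paper treats the relation $(2+k_1)a^2+c^2+(k_2+k_3)ac=(3+k_1+k_2+k_3)a^2c$ as a quadratic in $c$ and applies the quadratic formula: writing $k=a k_1+(a-1)k_2+(a-1)k_3+3a$, integrality of $c$ forces the discriminant $k^2-4(k_1+2)$ to be a perfect square $l^2$; from $a\ge1$ one gets $k\ge k_1+3$, hence $k+l>k_1+2$, and then the factorisation $(k+l)(k-l)=4(k_1+2)$ together with parity pins down $(k-l,k+l)=(2,2(k_1+2))$, giving $a=1$ and $c\in\{1,k_1+2\}$. Your argument instead isolates the structural fact $x\mid z$ up front via the $\gcd$ normalisation, after which the substitution $z=xe$ and the constraint $e\mid 2+k_1$ reduce everything to the inequality $(e-1)(f-1)\le 0$. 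The paper's approach is a direct discriminant calculation and requires no auxiliary divisibility step; your approach trades that computation for the $\gcd$ trick and obtains a slightly more transparent endgame (the $(e-1)(f-1)=0$ dichotomy makes the two outputs $(1,1,1)$ and $(1,1,k_1+2)$ visibly correspond to $e=1$ and $f=1$). Both are elementary and of comparable length. One small remark: your parenthetical ``$e=f=1$ impossible'' is correct but not needed for the case split---it just records that the two branches are disjoint.
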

\begin{proof}
Let $(a,b,c)$ be a positive integer solution to \eqref{Diophantine} that contains repeated numbers. We prove the case of $a=b$. Then, by substituting $(a,a,c)$ for $(x,y,z)$ in \eqref{Diophantine}, we have
\[
(2+k_3)a^2+c^2+(k_1+k_2)ac=(3+k_1+k_2+k_3)a^2c.
\]
Therefore, we have 
\[
 c= \dfrac{a^2 k_3 + a^2 k_1 + a^2 k_2 + 3 a^2 - a k_1 - a k_2 \pm a\sqrt{ (a k_3 + (a - 1) k_1 + (a-1) k_2 + 3 a )^2 - 4 (k_3 + 2)}}{2}.
\]
We set $k=a k_3 + (a - 1) k_1 + (a-1) k_2 + 3 a>0$. In order for $c$ to be an integer, the inside of the square root must be a square number. Therefore, there exists an positive integer $l$ such that $l^2=k^2-4(k_3+2)$. Since $a\geq 1$, we have $k\geq k_3+3$. Therefore, $k+l>k_3+2$ holds. Since $(k+l)(k-l)=4(k_3+2)$, we have $1\leq k-l \leq 3$, and $(k-l,k+l)$ must be one of $(1,4(k_3+2)),(2,2(k_3+2)),\left(3,\dfrac{4(k_3+2)}{3}\right)$. Of the three, it cannot be $(1,4(k_3+2))$ and $\left(3,\dfrac{4(k_3+2)}{3}\right)$ because $k=\dfrac{(k+l)+(k-l)}{2}$ is an integer. When $(k-l,k+l)=(2, 2(k_3+2))$, we have $k=k_3+3$ and $l=k_3+1$. Thus we have $(a,a,c)=(1,1,1)$ or $(1,1,k_3+2)$. The cases that $a = c$ and $b = c$ can be proved in the same way.
\end{proof}
The triples $(1,1,1),(k_1+2,1,1),(1,k_2+2,1)$ and $(1,1,k_3+2)$ are said to be \emph{singular}, and other positive integer solutions to \eqref{Diophantine} are said to be \emph{nonsingular}.
\begin{proposition}\label{nonsingular-induction}
Let $(x,y,z)=(a,b,c)$ be a nonsingular positive integer solution to \eqref{Diophantine}, and we assume $a>b>c$. Then we have\vspace{3mm}
\begin{itemize}
\item[(1)] $\dfrac{a^2+k_2ac+c^2}{b}>a(>c)$\vspace{3mm},
\item[(2)] $\dfrac{a^2+k_3ab+b^2}{c}>a(>b)$\vspace{3mm},
\item[(3)] $b>\dfrac{b^2+k_1bc+c^2}{a}$.
\end{itemize}
\end{proposition}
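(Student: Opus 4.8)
The plan is to dispatch (1) and (2) directly from the hypothesis $a>b>c$, and to reduce (3) to determining the sign of a quadratic at the point $b$.

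For (2): clearing the positive denominator $c$, the inequality $\frac{a^2+k_1ab+b^2}{c}>a$ is equivalent to $a^2+k_1ab+b^2>ac$; since $c<a$ we have $ac<a^2\le a^2+k_1ab+b^2$ (the added terms being nonnegative), which is what we want, and the parenthetical $a>b$ is just the hypothesis. The identical computation with the variables permuted proves (1): $\frac{a^2+k_3ac+c^2}{b}>a$ is equivalent to $a^2+k_3ac+c^2>ab$, and $b<a$ gives $ab<a^2\le a^2+k_3ac+c^2$.

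The substantive statement is (3). Fixing $y=b$ and $z=c$ in \eqref{Diophantine} turns it into a monic quadratic equation in $x$,
\[
f(x):=x^2-\bigl((3+k_1+k_2+k_3)bc-k_1b-k_3c\bigr)x+\bigl(b^2+k_2bc+c^2\bigr)=0,
\]
whose two roots, by Proposition~\ref{inductive-solution} (equivalently, by Vieta's formulas), are $a$ and $a':=\frac{b^2+k_2bc+c^2}{a}$; thus $f(x)=(x-a)(x-a')$. Then $f(b)=(b-a)(b-a')$, and since $b<a$ by hypothesis the factor $b-a$ is negative, so the desired inequality $b>a'$ is equivalent to $f(b)<0$. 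I would then compute
\[
f(b)=(2+k_1)b^2+(k_2+k_3)bc+c^2-(3+k_1+k_2+k_3)b^2c
\]
and check $f(b)<0$ with the crude bounds afforded by $1\le c<b$: since $bc\le b^2$ and $c^2<b^2$, the first three terms sum to something strictly smaller than $(3+k_1+k_2+k_3)b^2$, which in turn is at most $(3+k_1+k_2+k_3)b^2c$ because $c\ge 1$.

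I do not foresee a real obstacle; (1) and (2) are essentially formal and (3) hinges only on the elementary estimate $f(b)<0$. The points requiring care are keeping that estimate strict — which is precisely where nonsingularity enters through $b>c$ (hence $b\ge 2$) — and observing that, although a priori the two roots of $f$ could be labelled either way, the known inequality $a>b$ combined with the sign of $f(b)$ forces it to be the \emph{other} root $a'$ that lies strictly below $b$.
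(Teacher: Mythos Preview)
Your proof is correct. Parts (1) and (2) match the paper's argument essentially verbatim: both reduce to $ab<a^2$ (resp.\ $ac<a^2$) and the nonnegativity of the remaining terms.

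For (3) the paper also sets up the same quadratic $f(x)=(x-a)(x-a')$ and aims to show $f(b)<0$, but it establishes this differently: it introduces the two-variable function $g(y,z)=(2+k_1)y^2-(3+k_1+k_2+k_3)y^2z+(k_2+k_3)yz+z^2$, checks that both partial derivatives are negative in the region $y>z\ge 1$, and then concludes from $g(1,1)=0$ that $g(b,c)=f(b)<0$. Your one-line estimate --- bounding $(2+k_1)b^2+(k_2+k_3)bc+c^2$ above by $(3+k_1+k_2+k_3)b^2$ via $c<b$, and then by $(3+k_1+k_2+k_3)b^2c$ via $c\ge 1$ --- is more elementary and shorter, and it isolates precisely where the strict inequality $b>c$ (hence nonsingularity) is consumed. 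The paper's monotonicity argument is a bit heavier here, though it has the minor advantage of making transparent that $g(b,c)<0$ holds throughout the entire real region $b>c\ge 1$, not just at integer points.
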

\begin{proof}
We prove (1). We have
\begin{align*}
   \dfrac{a^2+k_2ac+c^2}{b}-a=\dfrac{a^2+k_2ac+c^2-ab}{b}>\dfrac{a^2+k_2ac+c^2-a^2}{b}=\dfrac{c^2+k_2ac}{b}>0.
\end{align*}
We can show (2) in the same way as (1). We will show (3).
We set
\begin{align*}
    f(x)&:=(x-a)\left(x-\dfrac{b^2+k_1bc+c^2}{a}\right)\\
    &=x^2-((3+k_1+k_2+k_3)bc-k_3b-k_2c)x+(b^2+c^2+k_1bc)
\end{align*}
It suffices to show that 
\[f(b)=(2+k_3)b^2-(3+k_1+k_2+k_3)b^2c+(k_1+k_2)bc+c^2<0.\] We consider a function from $\mathbb{R}^2$ to $\mathbb{R}$
\begin{align*}\label{equation}
    g(y,z)=(2+k_3)y^2-(3+k_1+k_2+k_3)y^2z+(k_1+k_2)yz+z^2.
\end{align*}
We remark that $g(b,c)=f(b)$. By considering the partial derivative of $g$ in the $y$ direction, we have
\[\dfrac{\partial g}{\partial y}=2(2+k_3)y-2(3+k_1+k_2+k_3)yz+(k_1+k_2)z.\]
When $y>z\geq 1$, we have
\begin{align*}
\dfrac{\partial g}{\partial y}(y,z)&<2(2+k_3)y-2(3+k_1+k_2+k_3)yz+(k_1+k_2)y\\
&=-y((6z-4)+k_3(2z-2)+k_1(2z-1)+k_2(2z-1))\\
&<-((6z-4)+k_3(2z-2)+k_1(2z-1)+k_2(2z-1))<0.
\end{align*}
Moreover, by considering the partial derivative of $g$ in the $z$ direction, we have
\[\dfrac{\partial g}{\partial z}=-(3+k_1+k_2+k_3)y^2+(k_1+k_2)y+2z.\]
When $y>z\geq 1$, we have
\begin{align*}
\dfrac{\partial g}{\partial z}(y,z)&<-(3+k_1+k_2+k_3)y^2+(k_1+k_2)y+2y\\
&<-(3+k_1+k_2+k_3)y^2+(k_1+k_2)y^2+2y^2<-y^2(1+k_3)<0.
\end{align*}
Therefore, $g(y,z)$ is strictly monotonically decreasing in the $y$ and $z$ directions in the range $y>z\geq 1$. Since $g(1,1)=0$, we have $g(b,c)=f(b)<0$.
\end{proof}
In Proposition \ref{nonsingular-induction}, we assume that $a>b> c$, but this assumption is not essential:

\begin{corollary}\label{nonsingular-induction-general}
Let $(x,y,z)=(a,b,c)$ be a nonsingular positive integer solution to \eqref{Diophantine}. We set $(a',b,c)$ (resp. $(a,b',c),(a,b,c')$) as the first (resp. second, third) Vieta jumping.
\begin{itemize}
    \item [(1)] If $a$ is maximal in $(a,b,c)$, then $a'$ is not maximal in $(a',b,c)$, $b'$ is maximal in $(a,b',c)$, and $c'$ is maximal in $(a,b,c')$,
    \item [(2)] if $b$ is maximal in $(a,b,c)$, then $a'$ is maximal in $(a',b,c)$, $b'$ is not maximal in $(a,b',c)$, and $c'$ is maximal in $(a,b,c')$,
    \item [(3)] if $c$ is maximal in $(a,b,c)$, then $a'$ is maximal in $(a',b,c)$, $b'$ is maximal in $(a,b',c)$, and $c'$ is not maximal in $(a,b,c')$.
\end{itemize}
\end{corollary}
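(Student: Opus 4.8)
The plan is to deduce everything from Proposition \ref{nonsingular-induction} together with the symmetry of \eqref{Diophantine}. First I would record the point that makes the case division meaningful: by Lemma \ref{singular}, a nonsingular solution $(a,b,c)$ has pairwise distinct entries, so exactly one of them is the maximum and there is an unambiguous total order on $\{a,b,c\}$; moreover each triple produced by a Vieta jumping contains a strictly largest entry (the inequalities of Proposition \ref{nonsingular-induction} are strict), so ``maximal'' is well defined throughout.

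The heart of the argument is the $S_3$-equivariance of the whole set-up. Regard $(k_1,k_2,k_3)$ as labels on the edges $\{1,2\},\{2,3\},\{1,3\}$ of the triangle on the coordinate positions, so that the $i$-th Vieta jumping rewrites the $i$-th coordinate using the label of the edge opposite position $i$. With this dictionary, for any $\sigma\in S_3$ the simultaneous action ``permute $(x,y,z)$ by $\sigma$ and permute $(k_1,k_2,k_3)$ by the induced permutation of edges'' carries solutions of \eqref{Diophantine} to solutions of \eqref{Diophantine} (with the permuted parameters) and intertwines the three Vieta jumpings. I would verify this directly on the equation and on the three jumping formulas; e.g.\ the transposition $(2\,3)$ swaps $k_1\leftrightarrow k_3$, fixes $k_2$, exchanges the second and third Vieta jumpings, and fixes the first.

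Granting equivariance, it suffices to treat a single ordering. Choosing $\sigma$ so that the permuted triple satisfies (first coordinate) $>$ (second) $>$ (third) reduces us exactly to the hypothesis $a>b>c$ of Proposition \ref{nonsingular-induction}, i.e.\ to case (1) with this extra ordering. There part (3) of the proposition gives $a'=\frac{b^2+k_2bc+c^2}{a}<b$, so $a'$ is not maximal in $(a',b,c)$; part (1) gives $b'=\frac{a^2+k_3ac+c^2}{b}>a>c$, so $b'$ is maximal in $(a,b',c)$; and part (2) gives $c'=\frac{a^2+k_1ab+b^2}{c}>a>b$, so $c'$ is maximal in $(a,b,c')$. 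Transporting back by $\sigma^{-1}$ and reading off which coordinate is maximal yields all of (1)--(3), including the second ordering inside each case.

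I expect the only real obstacle to be the equivariance bookkeeping of the previous paragraph — matching edge labels, parameters, and jumping formulas consistently across all of $S_3$. Once that is pinned down the corollary is immediate. As a fallback that sidesteps the symmetry language, one could instead just rerun the three inequality computations of Proposition \ref{nonsingular-induction} for each of the remaining five orderings of $(a,b,c)$; this is longer but entirely routine and parallel to the proof already given.
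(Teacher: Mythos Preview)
Your argument is correct. The paper's own proof is a one-line remark: the case $a>b>c$ is Proposition \ref{nonsingular-induction}, and ``the other cases are proved in the same way as the proof of Proposition \ref{nonsingular-induction}''---i.e.\ one literally reruns the three inequality computations for each of the remaining orderings of $(a,b,c)$. This is exactly the fallback you describe at the end.

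Your main route is different and a little more conceptual: you observe that the $S_3$-action permuting coordinate positions, together with the induced permutation of the edge-labels $(k_1,k_2,k_3)$, carries \eqref{Diophantine} to itself (with permuted parameters) and intertwines the three Vieta jumpings; since Proposition \ref{nonsingular-induction} is stated and proved for arbitrary nonnegative $(k_1,k_2,k_3)$, transporting by a suitable $\sigma\in S_3$ reduces every ordering to the single case $a>b>c$ already handled. This buys you the five remaining orderings for free once the equivariance is checked, whereas the paper's approach repeats essentially the same calculus six times. The trade-off is that you have to verify the equivariance carefully (your edge-labelling dictionary is correct), but this is a short and uniform check, and it also explains \emph{why} the six computations are parallel rather than merely asserting that they are.
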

\begin{proof}
When $a> b> c$, it is proved by Proposition \ref{nonsingular-induction}. The other cases are proved in the same way as the proof of Proposition \ref{nonsingular-induction}.
\end{proof}

\begin{remark}\label{remark-adjacent}
By Corollary \ref{nonsingular-induction-general}, for a nonsingular triplet $(a,b,c)$ in $\TT^{k_1,k_2,k_3}$,
\begin{itemize}
     \item [(i)] if $a$ is a maximal number in $(a,b,c)$, then the parent of $(a,b,c)$ is $\left(\dfrac{b^2+k_1bc+c^2}{a},b,c\right)$,
        
    \item [(ii)] if $b$ is a maximal number in $(a,b,c)$, then the parent of $(a,b,c)$ is $\left(a,\dfrac{a^2+k_2ac+c^2}{b},c\right)$,
    
    \item [(iii)] if $c$ is a maximal number in $(a,b,c)$, then the parent of $(a,b,c)$ is $\left(a,b,\dfrac{a^2+k_3ab+b^2}{c}\right)$.
\end{itemize}
Moreover, each non-singular triplet in $\mathbb T^{k_1,k_2,k_3}$ have a smaller maximum than its children. Therefore, singular triplets cannot be children of non-singular triplets in $\mathbb T^{k_1,k_2,k_3}$. Hence, each singular triplet appears in $\mathbb T^{k_1,k_2,k_3}$ once, and the above facts (i),(ii),(iii) are also true for singular triplets other than $(1,1,1)$. Thus, in $\TT^{k_1,k_2,k_3}$, the three vertices adjacent to each vertex $(a, b, c)$ are respectively the one where $a$ in $(a, b, c)$ is replaced by another number, the one where $b$ in $(a, b, c)$ is replaced by another number, and  the one where $c$ in $(a, b, c)$ is replaced by another number.
\end{remark}

Now, we will show Theorem \ref{Diophantinetheorem}.

\begin{proof}[Proof of Theorem \ref{Diophantinetheorem}]
By Proposition \ref{inductive-solution} and the fact that $(x,y,z)=(1,1,1)$ is a positive integer solution to \eqref{Diophantine}, all vertices in $\mathbb T^{k_1,k_2,k_3}$ are positive integer solutions to \eqref{Diophantine}. Suppose that $(x,y,z)=(a,b,c)$ is a nonsingular positive integer solution to \eqref{Diophantine}. Then, by Corollary \ref{nonsingular-induction-general}, there is one of the Vieta jumpings of $(a,b,c)$ whose maximal number is smaller than that of $(a,b,c)$. This process can be continued as long as the solution is nonsingular. Since the solutions that appear in this operation are always positive integer solutions, a singular solution will appear in a finite number of the operations. By Lemma \ref{singular}, when a nonsingular solution changes to a singular solution, the singular solution is $(k_1+2,1,1),(1,k_2+2,1)$ or $(1,1,k_3+2)$. Since any Vieta jumping of a triplet in $\TT^{k_1,k_2,k_3}$ is again in $\TT^{k_1,k_2,k_3}$ by Remark \ref{remark-adjacent}, we see that $(a,b,c)$ is contained in the vertices of the tree $\mathbb T^{k_1,k_2,k_3}$ by following above operations in reverse. We prove the uniqueness. If not, we see that $(1,1,1)$ is not unique by repeating above operations. This is a contradiction.
\end{proof}

As in the Markov Diophantine equation \eqref{Diophantine2} or the Gyoda's equation \eqref{Diophantine3}, there are several corollaries that can be established.

\begin{corollary}\label{relatively-prime}
For any positive integer solution $(x,y,z)=(a,b,c)$ to \eqref{Diophantine}, all pairs in $a,b,c$ are relatively prime.
\end{corollary}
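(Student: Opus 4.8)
The plan is to induct along the tree $\TT^{k_1,k_2,k_3}$ furnished by Theorem \ref{Diophantinetheorem}. By that theorem it suffices to prove the statement for every vertex $(a,b,c)$ of $\TT^{k_1,k_2,k_3}$, and the root $(1,1,1)$ is trivially pairwise coprime. By Remark \ref{remark-adjacent} each edge of $\TT^{k_1,k_2,k_3}$ is one of the three Vieta jumpings, so it is enough to check that the Vieta jumpings preserve pairwise coprimality; induction on the distance from $(1,1,1)$ then completes the argument.

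For the key step I would argue as follows. Suppose $(a,b,c)$ is a positive integer solution to \eqref{Diophantine} with $a,b,c$ pairwise coprime, and let $(a',b,c)$ be its first Vieta jumping, so that $aa'=b^2+k_2bc+c^2$. The pair $(b,c)$ is unchanged, hence still coprime. If a prime $p$ divided both $a'$ and $b$, then from $p\mid aa'=b^2+k_2bc+c^2$ and $p\mid b$ we would get $p\mid c^2$, hence $p\mid c$, contradicting $\gcd(b,c)=1$; thus $\gcd(a',b)=1$, and symmetrically (exchanging the roles of $b$ and $c$) $\gcd(a',c)=1$. The same computation, using the identities $bb'=a^2+k_3ac+c^2$ for the second Vieta jumping and $cc'=a^2+k_1ab+b^2$ for the third, disposes of the remaining two cases.

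I do not expect a real obstacle here: the entire argument rests on the three Vieta identities together with the elementary divisibility observation above. The only points needing care are to invoke the full strength of Theorem \ref{Diophantinetheorem}, so that arbitrary positive integer solutions --- not merely those reachable from $(1,1,1)$ --- are covered, and to run the induction downward from the root, where pairwise coprimality is immediate.
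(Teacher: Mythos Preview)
Your argument is correct, and close in spirit to the paper's, but the organization is genuinely dual. You ascend the tree from the root $(1,1,1)$, showing that each Vieta jumping preserves pairwise coprimality via the product relation $aa'=b^2+k_2bc+c^2$ (and its analogues): a common prime of $a'$ and $b$ would force a common prime of $b$ and $c$. The paper instead works by \emph{descent}: it first observes directly from \eqref{Diophantine}, rewritten as $z^2=(3+k_1+k_2+k_3)xyz-x^2-y^2-k_1xy-k_2yz-k_3zx$, that a common prime of any two of $a,b,c$ must divide the third; then it notes that this common prime of all three survives a Vieta jump to a neighbour with smaller maximum, and iterates down to $(1,1,1)$, a contradiction. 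Both approaches lean on the tree structure from Theorem~\ref{Diophantinetheorem} and a single divisibility observation; yours isolates the invariance of coprimality under mutation more explicitly, which is perhaps cleaner for later reuse, while the paper's ``two share $\Rightarrow$ three share'' lemma is a statement about a single solution that does not require knowing the neighbour is again a solution.
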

\begin{proof}
The claim is true for $(a,b,c)=(1,1,1)$. 
We only prove that $a$ and $b$ are relatively prime. By transforming \eqref{Diophantine} as \[z^2=(3+k_1+k_2+k_3)xyz-x^2-y^2-k_1yz-k_2zx-k_3xy,\] and substituting $(x,y,z)=(a,b,c)$, if $a,b$ have a common divisor $d\neq 1$, then we see that $c$ can be divided by a prime divisor $d'$ of $d$. Thus, $d'$ is a common divisor of $a,b,c$. Therefore, by Proposition \ref{inductive-solution}, the neighbor $(a',b',c')$ of $(a,b,c)$ on the tree $\mathbb T^{k_1,k_2,k_3}$ whose maximal number is smaller than $\max\{a,b,c\}$ has the common divisor $d'$. By repeating this operation, we see that $d'$ is a common divisor of $(1,1,1)$. Thus, we must $d'=1$. This is a contradiction. Therefore, we have $d=1$.
\end{proof}

\begin{corollary}\label{cor}
Every number appearing in the tree $\mathbb T^{k_1,k_2,k_3}$ appears as the maximal number of some positive integer solution to \eqref{Diophantine}.
\end{corollary}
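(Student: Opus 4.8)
The plan is to use the fact, established in Remark \ref{remark-adjacent} and Corollary \ref{nonsingular-induction-general}, that along each edge of the tree a \emph{new} entry is introduced precisely in the maximal slot of the child. Given a number $n$ occurring somewhere in $\TT^{k_1,k_2,k_3}$, I would choose a vertex $v$ of \emph{minimal depth} among all vertices having $n$ as one of their three entries, and argue that $n$ is forced to be the maximal entry of $v$; since $v$ is a positive integer solution to \eqref{Diophantine} by Theorem \ref{Diophantinetheorem}, this proves the corollary.

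First I would dispose of the trivial case $n=1$, where $n$ is the maximal entry of the solution $(1,1,1)$. For $n\geq 2$, the root $(1,1,1)$ does not have $n$ among its entries, so the chosen $v$ is not the root and hence has a well-defined parent $w$. By minimality of the depth of $v$, the parent $w$ does not contain $n$ among its entries. By the generation rules, $v$ is obtained from $w$ by altering exactly one of the three coordinates; since $w$ does not contain $n$ but $v$ does, the entry of $v$ equal to $n$ must be the altered coordinate. Finally, I would invoke Remark \ref{remark-adjacent}(i)--(iii): for any non-root vertex $t$ of the tree (singular or not), $t$ differs from its parent exactly in the slot carrying the maximal entry of $t$. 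Applied to $t=v$, this identifies the altered coordinate with the maximum of $v$, so $n$ is the maximal entry of $v$, as desired.

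I do not expect a real obstacle, since the structural work is already contained in Corollary \ref{nonsingular-induction-general} and Remark \ref{remark-adjacent}; the proof is essentially bookkeeping. The only points needing a moment's care are: (a) that $v$ has a \emph{unique} maximal entry, which for nonsingular $v$ follows from Lemma \ref{singular} (its entries are pairwise distinct) and for the singular vertices $(k_2+2,1,1),(1,k_3+2,1),(1,1,k_1+2)$ follows from $k_i+2\geq 2>1$; and (b) that Remark \ref{remark-adjacent}(i)--(iii) is being applied only to vertices other than $(1,1,1)$, which is exactly what the assumption $n\geq 2$ guarantees. If one prefers to avoid the notion of depth, the same argument works verbatim with $v$ chosen to minimize $\max\{a,b,c\}$ among vertices containing $n$, using that each non-root vertex has strictly larger maximum than its parent.
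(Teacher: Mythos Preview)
Your proof is correct. Both your argument and the paper's rest on the same structural fact (Remark \ref{remark-adjacent}): every non-root vertex differs from its parent precisely in the slot carrying its maximum. The paper packages this dynamically: starting from any solution containing $n$, it performs a case analysis on whether $n$ is the largest, second largest, or smallest entry, and in the latter cases it describes explicit Vieta jumps toward the root until $n$ is promoted to the maximal slot. You package the same idea extremally: pick a vertex of minimal depth containing $n$ and observe that $n$ must sit in the altered (hence maximal) slot. Your formulation avoids the case split and the separate treatment of $n\in\{k_1+2,k_2+2,k_3+2\}$, at the cost of invoking the slightly stronger form of Remark \ref{remark-adjacent} (that (i)--(iii) hold for all non-root vertices, singular ones included). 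Either way the content is the same; your version is just a tidier repackaging.
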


\begin{proof}
Let $n$ be a number appearing in $\mathbb T^{k_1,k_2,k_3}$. When $n=1,k_1+2,k_2+2,k_3+2$, they are the maximal numbers of $(1,1,1), (k_1+2,1,1),(1,k_2+2,1)$ and $(1,1,k_3+2)$, respectively. We assume $n\neq 1, k_1+2,k_2+2,k_3+2$. We take a positive integer solution $(x,y,z)=(a,b,c)$ containing $n$. We assume that $a> b>c$. If $n=a$, then we are done. If $n=b$, then $n$ is the maximal number in the neighbor of $(a,b,c)$ in the tree $\mathbb T^{k_1,k_2,k_3}$ obtained by swapping $a$ by Proposition \ref{inductive-solution}. If $n=c$, as we traverse the neighbors with smaller maximal number, $n$ becomes the second largest. Therefore, this case is attributed to the $n=b$ case. Even if the magnitude correlation of $a, b$, and $c$ are different, the statement is proved in the same way.
\end{proof}
\begin{remark}
In the Markov case, that is, $k_1=k_2=k_3=0$, there is a conjecture that triplets with a common maximum will coincide if the order of the components is reordered (the \emph{Markov conjecture}). However, in the general case, there can be essentially different triplets with a common maximum (i.e., they will not coincide even if the order of the components is reordered). Actually, when $k_1=1,k_2=2,k_3=0$, $(1,81,17)$ and $(7,81,2)$ are both solutions to \eqref{Diophantine}, as seen in \eqref{tree}. The Markov conjecture is proved to be true when the largest number in a triplet can be written as $p^n$ using the prime number $p$ (\cites{sch, but}), but this counterexample shows that even that does not hold in the general case.
\end{remark}
Let us consider the case of $k_1=k_2=k_3=2$, that is, the equation
\begin{align}\label{Diophantine5}
    x^2+y^2+z^2+2yz+2zx+2xy=9xyz.
\end{align}
In this situation, we have the following theorem:

\begin{theorem}
If a positive integer triplet $(a,b,c)$ is one of the solutions to the Markov equation \eqref{Diophantine2}, then $(a^2,b^2,c^2)$ is one of the solutions to \eqref{Diophantine5}. Conversely, if a positive integer triplet $(A,B,C)$ is one of the solutions to \eqref{Diophantine5}, then $(\sqrt{A},\sqrt{B},\sqrt{C})$ is one of the positive integer solutions to \eqref{Diophantine2}.
\end{theorem}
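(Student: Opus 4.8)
The forward direction is an immediate algebraic identity: if $a^2+b^2+c^2=3abc$, then squaring both sides gives $(a^2)^2+(b^2)^2+(c^2)^2+2a^2b^2+2b^2c^2+2c^2a^2=(a^2+b^2+c^2)^2=9a^2b^2c^2$, which is exactly \eqref{Diophantine5} evaluated at $(x,y,z)=(a^2,b^2,c^2)$; positivity and integrality are clear. So all the work is in the converse, and the plan is to show that the componentwise squaring map $\Phi\colon(a,b,c)\mapsto(a^2,b^2,c^2)$ is a bijection from the vertex set of the Markov tree $\TT^{0,0,0}$ onto the vertex set of $\TT^{2,2,2}$.

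First I would check that $\Phi$ is a morphism of rooted trees $\TT^{0,0,0}\to\TT^{2,2,2}$. It sends the root $(1,1,1)$ to the root $(1,1,1)$, and the three children $(2,1,1),(1,2,1),(1,1,2)$ of the root to $(4,1,1),(1,4,1),(1,1,4)$, which are precisely the three children of the root of $\TT^{2,2,2}$ (with $k_1=k_2=k_3=2$). For the generation rule below the first level, note that on positive integers $x\mapsto x^2$ is strictly increasing, so the maximal entry of $(a,b,c)$ and of $(a^2,b^2,c^2)$ sit in the same position; and, using the Markov relation in the form $a^2+c^2=b(3ac-b)$ (so $b\mid a^2+c^2$), and likewise $c\mid a^2+b^2$, $a\mid b^2+c^2$, one computes for instance $\dfrac{(a^2)^2+2a^2c^2+(c^2)^2}{b^2}=\dfrac{(a^2+c^2)^2}{b^2}=\left(\dfrac{a^2+c^2}{b}\right)^2$, so that $\Phi$ carries the two children of $(a,b,c)$ prescribed by rule (i) (resp. (ii), (iii)) of $\TT^{0,0,0}$ exactly onto the two children of $(a^2,b^2,c^2)$ prescribed by rule (i) (resp. (ii), (iii)) of $\TT^{2,2,2}$. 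The generation rules are unambiguous because, by Lemma \ref{singular}, every vertex other than the root has a strict maximum.

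Next, $\Phi$ is obviously injective on triples of positive integers. A tree morphism that fixes the root and, at every vertex $v$, maps the set of children of $v$ injectively into the set of children of $\Phi(v)$ while $v$ and $\Phi(v)$ have the same number of children (three at the root, two elsewhere), is automatically surjective by induction on depth. Hence $\Phi$ is a bijection from the vertices of $\TT^{0,0,0}$ onto the vertices of $\TT^{2,2,2}$. Finally, by Theorem \ref{Diophantinetheorem} the vertices of $\TT^{0,0,0}$ are exactly the positive integer solutions to \eqref{Diophantine2} and those of $\TT^{2,2,2}$ are exactly the positive integer solutions to \eqref{Diophantine5}; therefore every solution $(A,B,C)$ of \eqref{Diophantine5} equals $(a^2,b^2,c^2)$ for a unique Markov solution $(a,b,c)$, which yields the claim, in particular that $\sqrt{A},\sqrt{B},\sqrt{C}$ are positive integers.

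The only mildly delicate points are the matching of the three cases of the generation rules under $\Phi$ (routine once one records the divisibilities $b\mid a^2+c^2$, $c\mid a^2+b^2$, $a\mid b^2+c^2$ coming from the Markov relation) and the remark that squaring preserves which coordinate is maximal; I do not expect a genuine obstacle. An alternative, essentially equivalent, route would be to run Vieta descent (Lemma \ref{singular}, Corollary \ref{nonsingular-induction-general}) directly on a solution $(A,B,C)$ of \eqref{Diophantine5} down to one of the singular triples $(1,1,1),(4,1,1),(1,4,1),(1,1,4)$ — all of which are squares of Markov triples — and then verify that a Vieta jump for \eqref{Diophantine5} applied to a triple of squares again produces a triple of squares; the tree-isomorphism formulation simply packages this bookkeeping more cleanly.
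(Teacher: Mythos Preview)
Your proof is correct. The converse is essentially the paper's argument: both of you appeal to Theorem \ref{Diophantinetheorem} for $\TT^{2,2,2}$ and both rely on the computation
\[
\frac{(b^2)^2+2b^2c^2+(c^2)^2}{a^2}=\left(\frac{b^2+c^2}{a}\right)^2
\]
to show that Vieta jumping in \eqref{Diophantine5} carries a squared Markov triple to a squared Markov triple. The paper performs this computation while proving the forward direction (by induction along $\TT^{0,0,0}$) and then invokes it in one sentence for the converse; you package the same bookkeeping as an explicit rooted-tree isomorphism $\Phi\colon\TT^{0,0,0}\to\TT^{2,2,2}$. Your ``alternative route'' paragraph is in fact precisely the paper's proof.

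The genuine difference is the forward direction. The paper proves it inductively via the Vieta-jump calculation above, whereas you observe that squaring the Markov identity $a^2+b^2+c^2=3abc$ yields \eqref{Diophantine5} at $(a^2,b^2,c^2)$ in one line. Your route is shorter and does not use the tree structure at all for this half; the paper's inductive route has the side benefit that the key Vieta computation needed for the converse is already on the page, so its converse can be stated in a single sentence. Either way the content is the same.
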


\begin{proof}
We prove the former statement. When $(a,b,c)=(1,1,1)$, it is clear. We assume that $(a^2,b^2,c^2)$ is an integer solution to \eqref{Diophantine5}. It suffices to show that the Vieta jumpings of $(a^2,b^2,c^2)$ in \eqref{Diophantine5} are given by \[\left(\left(\dfrac{b^2+c^2}{a}\right)^2,b^2,c^2\right), \left(a^2,\left(\dfrac{a^2+c^2}{b}\right)^2,c^2\right),\left(a^2,b^2,\left(\dfrac{a^2+b^2}{c}\right)^2\right).\] We only prove the case of the first Vieta jumping. The first Vieta jumping of $(a^2,b^2,c^2)$ in \eqref{Diophantine5} is 
\[\left(\dfrac{(b^2)^2+2b^2c^2+(c^2)^2}{a^2},b^2,c^2\right)=\left(\left(\dfrac{b^2+c^2}{a}\right)^2,b^2,c^2\right),\]
as desired. We will show the latter statement. By Theorem \ref{Diophantinetheorem}, each positive solution to \eqref{Diophantine5} has the form $(a^2,b^2,c^2)$, where $(a,b,c)$ is a solution to \eqref{Diophantine2}. This finishes the proof.
\end{proof}

\section{Proof of Theorem \ref{Diophantinetheorem2} and its corollaries}

To prove Theorem \ref{Diophantinetheorem2}, we consider the following equation:

\begin{align}\label{Diophantine6}
    X^2+Y^2+Z^2+kYZ+2ZX+2XY=(7+k)XYZ.
\end{align}
This is the equation substituted \eqref{Diophantine4} with $X=x, Y=y^2,Z=z^2$ and a specialization of \eqref{Diophantine} with $k_1=k,k_2=2,k_3=2$. Then, the Vieta jumpings of $(A,B,C)$ are 
\[\left(\dfrac{B^2+kBC+C^2}{A},B,C\right),\left(A,\dfrac{(A+C)^2}{B},C\right),\left(A,B,\dfrac{(A+B)^2}{C}\right).\] 
\begin{lemma}\label{specialization2k2}
Every positive integer solution to \eqref{Diophantine6} appears exactly once in $\mathbb T_{k,2,2}$. Moreover, for any positive integer solution $(A,B,C)$ to \eqref{Diophantine6}, there exist positive integers $b$ and $c$ such that $b^2=B$ and $c^2=C$.
\end{lemma}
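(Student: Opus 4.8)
The plan is to deduce the first assertion directly from Theorem~\ref{Diophantinetheorem} and then to prove the ``moreover'' part by tracking a perfect-square property along the tree. Indeed, \eqref{Diophantine6} is exactly \eqref{Diophantine} with $(k_1,k_2,k_3)=(2,k,2)$, so Theorem~\ref{Diophantinetheorem} gives that every positive integer solution of \eqref{Diophantine6} occurs exactly once as a vertex of $\TT_{2,k,2}$. Hence it suffices to show that every vertex of $\TT_{2,k,2}$ has the form $(A,b^2,c^2)$ with $b,c\in\ZZ_{>0}$. Since every vertex is obtained from the root $(1,1,1)$ by a finite sequence of Vieta jumpings (each edge of the tree is one of the three Vieta jumpings of its upper endpoint), I would set up an induction on the depth of a vertex, with the root --- for which the second and third coordinates are $1=1^2$ --- as the base case; the three depth-one vertices $(k+2,1,1)$, $(1,4,1)$, $(1,1,4)$ can be checked by hand as a sanity check.

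The inductive step reduces to one claim: if $(A,B,C)$ is a positive integer solution of \eqref{Diophantine6} with $B=b^2$ and $C=c^2$ for some $b,c\in\ZZ_{>0}$, then all three Vieta jumpings of $(A,B,C)$ again have perfect squares in their second and third coordinates. By Proposition~\ref{inductive-solution} specialized to $(k_1,k_2,k_3)=(2,k,2)$, those three Vieta jumpings are
\[
\left(\frac{B^2+kBC+C^2}{A},\,B,\,C\right),\qquad \left(A,\,\frac{(A+C)^2}{B},\,C\right),\qquad \left(A,\,B,\,\frac{(A+B)^2}{C}\right).
\]
The first leaves $B$ and $C$ untouched, so there is nothing to do. The second has second coordinate $(A+C)^2/B=(A+c^2)^2/b^2$ and third coordinate $c^2$; the third has third coordinate $(A+B)^2/C=(A+b^2)^2/c^2$ and second coordinate $b^2$. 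So the only thing to verify is that $(A+c^2)^2/b^2$ and $(A+b^2)^2/c^2$ are themselves perfect squares.

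This last point is the crux, although it is a mild one. Proposition~\ref{inductive-solution} tells us that $(A+C)^2/B=(A+c^2)^2/b^2$ is a positive integer, i.e. $b^2\mid (A+c^2)^2$; comparing prime factorizations, $b^2\mid n^2$ forces $b\mid n$, so $b\mid (A+c^2)$ and hence the new second coordinate equals $\big((A+c^2)/b\big)^2$, a perfect square. The same argument applies to $(A+b^2)^2/c^2$. This closes the induction, and combined with the first assertion it proves the lemma. I do not expect any serious difficulty here: beyond this elementary ``$b^2\mid n^2\Rightarrow b\mid n$'' step, the only work is the bookkeeping of matching each Vieta jumping to the correct child in $\TT_{2,k,2}$, which is already handled by Remark~\ref{remark-adjacent}.
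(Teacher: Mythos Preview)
Your proposal is correct and follows essentially the same approach as the paper: both deduce the first assertion from Theorem~\ref{Diophantinetheorem}, then prove the perfect-square property inductively along $\TT_{2,k,2}$ from the root $(1,1,1)$, using that the second and third Vieta jumpings take $(A,b^2,c^2)$ to triples with second/third coordinate $\bigl((A+c^2)/b\bigr)^2$ and $\bigl((A+b^2)/c\bigr)^2$, which are squares because a rational whose square is an integer is itself an integer. Your write-up is slightly more explicit (noting that the first Vieta jumping leaves $B,C$ unchanged, and phrasing the key step as $b^2\mid n^2\Rightarrow b\mid n$), but the argument is the same.
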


\begin{proof}
The former statement follows from Theorem \ref{Diophantinetheorem}. We prove the latter statement. When $(X,Y,Z)=(1,1,1)$, it is clear. We assume that $(A,b^2,c^2)$ is a solution to  \eqref{Diophantine6}. The second Vieta jumping of $(A,b^2,c^2)$ in \eqref{Diophantine6} is $\left(A,\left(\dfrac{A+c^2}{b}\right)^2,c^2\right)$. Since $\left(\dfrac{A+c^2}{b}\right)^2$ is an integer, so is $\dfrac{A+c^2}{b}$. In the same way, we obtain $\left(A,b^2,\left(\dfrac{A+b^2}{c}\right)^2\right)$ from $(A,b^2,c^2)$ by the third Vieta jumping. These facts finish the proof. 
\end{proof}

\begin{proposition}\label{equivalent-condition}
If a positive integer triplet $(a,b,c)$ is one of the solutions to \eqref{Diophantine6}, then $(a,b^2,c^2)$ is one of the solutions to \eqref{Diophantine4}. Conversely, if a positive integer triplet $(A,B,C)$ is one of the solutions to \eqref{Diophantine4}, then $(A,\sqrt{B},\sqrt{C})$ is one of the positive integer solutions to \eqref{Diophantine6}.
\end{proposition}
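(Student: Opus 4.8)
Recall that \eqref{Diophantine6} was obtained from \eqref{Diophantine4} by the monomial substitution $X = x$, $Y = y^2$, $Z = z^2$. So the plan is to turn this substitution into a termwise polynomial identity and then read off both implications from it, the only arithmetic input being the integrality of the square roots, which is exactly Lemma \ref{specialization2k2}.

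Concretely, I would first record that for all $a,b,c$ one has
\[
  a^2 + b^4 + c^4 + k b^2 c^2 + 2 a b^2 + 2 a c^2 - (7+k) a b^2 c^2
  \;=\;
  a^2 + (b^2)^2 + (c^2)^2 + 2 a (b^2) + k (b^2)(c^2) + 2 (c^2) a - (7+k) a (b^2)(c^2),
\]
which is nothing but the substitutions $b^4 = (b^2)^2$, $c^4 = (c^2)^2$, $b^2 c^2 = (b^2)(c^2)$ carried out term by term. Hence a triple $(a,b,c)$ satisfies \eqref{Diophantine4} if and only if $(a, b^2, c^2)$ satisfies \eqref{Diophantine6}. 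In particular, if $(a,b,c)$ is a positive integer solution to \eqref{Diophantine4}, then $(a, b^2, c^2)$ is a positive integer solution to \eqref{Diophantine6}, which gives one implication.

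For the converse, let $(A,B,C)$ be a positive integer solution to \eqref{Diophantine6}. By Lemma \ref{specialization2k2} there are positive integers $b,c$ with $B = b^2$ and $C = c^2$. Applying the displayed identity to the triple $(A,b,c)$ shows that $(A,b,c) = (A,\sqrt{B},\sqrt{C})$ satisfies \eqref{Diophantine4}, and since $\sqrt{B} = b$ and $\sqrt{C} = c$ are positive integers this is the desired positive integer solution. I expect no real obstacle here: the forward implication is a purely formal rewriting of the equation, and the one substantive point is that one cannot take square roots of $B$ and $C$ without first knowing they are perfect squares, which is not forced by \eqref{Diophantine6} in isolation but is precisely the extra content of Lemma \ref{specialization2k2} (derived there from the tree structure of $\mathbb{T}_{2,k,2}$). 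I would therefore present the proof in three short steps: the termwise identity; the forward implication as an immediate corollary; and the converse, invoking Lemma \ref{specialization2k2} to extract the square roots.
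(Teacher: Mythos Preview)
Your proof is correct and follows essentially the same approach as the paper: the direction from \eqref{Diophantine4} to \eqref{Diophantine6} is an immediate formal substitution, and the direction from \eqref{Diophantine6} to \eqref{Diophantine4} relies on Lemma~\ref{specialization2k2} to guarantee that the square roots are integers. Note that you have (correctly) interchanged the roles of \eqref{Diophantine4} and \eqref{Diophantine6} relative to the proposition as printed; the paper's own one-line proof (``The former statement is clear. The latter follows from Lemma~\ref{specialization2k2}.'') only makes sense under your reading, so the labels in the statement are evidently transposed.
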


\begin{proof}
The former statement is clear. The latter follows from Lemma \ref{specialization2k2}.  
\end{proof}

Now, we prove Theorem \ref{Diophantinetheorem2}.

\begin{proof}[Proof of Theorem \ref{Diophantinetheorem2}]
By Lemma \ref{specialization2k2} and Proposition \ref{equivalent-condition}, all positive integer solutions to \eqref{Diophantine4} are obtained from $(1,1,1)$ by repeating the Vieta jumpings 
\[(a,b,c)\mapsto \left(\dfrac{b^4+kb^2c^2+c^4}{a},b,c\right), (a,b,c)\mapsto \left(a,\dfrac{a+c^2}{b},c\right), (a,b,c)\mapsto \left(a,b,\dfrac{a+b^2}{c}\right).\]
Since the three vertices adjacent to $(a,b,c)$ in $\TT^k$ are triplets replacing different components of $(a,b,c)$, respectively, as in $\TT^{k,2,2}$, any Vieta jumping of a triplet in $\TT ^ k$ is again in $\TT ^ k$. Therefore, all positive integer solutions to \eqref{Diophantine4} appear in $\TT^k$. The uniqueness follows from the uniqueness of any triplet in $\TT^{k,2,2}$. Thus we obtain Theorem \ref{Diophantinetheorem2}.
\end{proof}
Next, we prove an analogue of Corollary \ref{relatively-prime}.
\begin{corollary}
For any positive integer solution $(x,y,z)=(a,b,c)$ to \eqref{Diophantine4}, all pairs in $a,b,c$ are relatively prime.
\end{corollary}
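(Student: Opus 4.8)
The plan is to reduce this to Corollary~\ref{relatively-prime}, which was already established for the whole family \eqref{Diophantine}. Recall that \eqref{Diophantine6} is obtained from \eqref{Diophantine4} by the substitution $X=x$, $Y=y^2$, $Z=z^2$, and that it is the specialization of \eqref{Diophantine} with $k_1=k_3=2$ and $k_2=k$. Hence, if $(a,b,c)$ is a positive integer solution to \eqref{Diophantine4}, then $(a,b^2,c^2)$ is a positive integer solution to \eqref{Diophantine6}, and therefore a positive integer solution to \eqref{Diophantine} with the parameters $(k_1,k_2,k_3)=(2,k,2)\in\ZZ_{\geq 0}^3$. Applying Corollary~\ref{relatively-prime} to this solution, we get that $a$, $b^2$, and $c^2$ are pairwise relatively prime.

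It then remains to observe that coprimality of $(a,b^2,c^2)$ descends to coprimality of $(a,b,c)$: since a prime divides $n^2$ if and only if it divides $n$, we have $\gcd(a,b)\mid\gcd(a,b^2)=1$, $\gcd(a,c)\mid\gcd(a,c^2)=1$, and $\gcd(b,c)\mid\gcd(b^2,c^2)=1$. Hence all three pairs among $a,b,c$ are relatively prime, which is the assertion.

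With Corollary~\ref{relatively-prime} available there is essentially no obstacle here; the only point with content is the (trivial) descent in the last paragraph. If a self-contained proof is preferred, one can instead imitate the proof of Corollary~\ref{relatively-prime} directly on the tree $\TT^k$: the claim holds for the root $(1,1,1)$, and if some prime $p$ divided two of the entries of a solution $(a,b,c)$ to \eqref{Diophantine4}, then reading \eqref{Diophantine4} as a quadratic in the remaining entry — e.g. the identity $c^2\bigl(c^2+kb^2+2a-(7+k)ab^2\bigr)=-(a+b^2)^2$ shows $p\mid c$ once $p\mid a$ and $p\mid b$ — forces $p$ to divide all three entries; propagating down the Vieta jumpings of $\TT^k$ (which only decrease the maximum and leave at least two entries unchanged) we reach $(1,1,1)$, a contradiction. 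I would present the first, shorter argument.
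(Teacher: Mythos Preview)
Your proof is correct and follows essentially the same approach as the paper: apply Corollary~\ref{relatively-prime} to the associated solution $(a,b^2,c^2)$ of \eqref{Diophantine6} and then observe that pairwise coprimality of $(a,b^2,c^2)$ forces pairwise coprimality of $(a,b,c)$. The paper's proof is just a terse two-line version of your first argument; your added self-contained variant is fine but unnecessary here.
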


\begin{proof}
By Corollary \ref{relatively-prime}, $(a,b^2,c^2)$ is relatively prime. Thus $(a,b,c)$ is relatively prime.
\end{proof}

\begin{remark}
An analogue of Corollary \ref{cor} does not hold in \eqref{Diophantine4}. Actually, when $k=1$, $11$ appears in some positive integer solutions in \eqref{Diophantine4} (for example, $(21,11,1)$ is one of solutions). However, $11$ is not maximal number in any solutions containing it. See \eqref{tree2}.
\end{remark}

\section{Generalized cluster pattern of positive integer solutions}
In this section, we introduce a generalized cluster pattern and verify that the trees $\TT^{k_1,k_2,k_3}$ and $\TT^k$ in Section 1 are realized as special cases of generalized cluster patterns. We start with recalling definitions of seed mutations and generalized cluster patterns according to \cites{chsh,nak15,nakrup}\footnote{In \cites{chsh,nak15,nakrup}, seeds and their mutations are defined in a version with $y$-variables (coefficients), but in this paper, these are not necessary and have been omitted}.
Let $n\in \ZZ_{\geq1}$ and $\Fcal$ be a rational function field of $n$ indeterminates.
A \emph{labeled seed} is a triplet $(\mathbf{x},B,\mathbf{Z})$, where
\begin{itemize}
\item $\mathbf{x}=(x_1, \dots, x_n)$ is an $n$-tuple of elements of $\mathcal F$ forming a free generating set of $\mathcal F$,
\item $B=(b_{ij})$ is an $n \times n$ integer matrix which is \emph{skew-symmetrizable}, that is, there exists a positive integer diagonal matrix $S$ such that $SB$ is skew-symmetric. We call $S$ a \emph{skew-symmetrizer} of $B$,
\item $\mathbf{Z}=(Z_1,\dots,Z_n)$ is an $n$-tuple of polynomials with the coefficient in $\ZZ_{\geq 0}$
\[Z_i(u)=z_{i,0}+z_{i,1}u+\cdots+z_{i,d_i}u^{d_i}\]
satisfying $z_{i,0}=z_{i,d_i}=1$. 
\end{itemize}

We say that $\xx$ is a \emph{cluster}, and  we refer to $x_i$, $B$ and $Z_i$ as the \emph{cluster variable}, the \emph{exchange matrix} and the \emph{exchange polynomial}, respectively. Furthermore, we set $D=\text{diag} (d_1,\dots, d_n)$, that is a positive integer diagonal matrix of rank $n$.

For an integer $b$, we use the notation $[b]_+=\max(b,0)$. 
Let $(\mathbf{x}, B,\mathbf{Z})$ be a labeled seed, and let $k \in\{1,\dots, n\}$. The \emph{seed mutation $\mu_k$ in direction $k$} transforms $(\mathbf{x},B,\mathbf{Z})$ into another labeled seed $\mu_k(\mathbf{x}, B,\mathbf{Z})=(\mathbf{x'}, B',\mathbf{Z}')$ defined as follows:
\begin{itemize}
\item The entries of $B'=(b'_{ij})$ are given by
\begin{align} \label{eq:matrix-mutation}
b'_{ij}=\begin{cases}-b_{ij} &\text{if $i=k$ or $j=k$,} \\
b_{ij}+d_k\left(\left[ b_{ik}\right] _{+}b_{kj}+b_{ik}\left[ -b_{kj}\right]_+\right) &\text{otherwise.}
\end{cases}
\end{align}
\item The cluster variables $\mathbf{x'}=(x'_1, \dots, x'_n)$ are given by
\begin{align}\label{eq:x-mutation}
x'_j=\begin{cases}\dfrac{\left(\mathop{\prod}\limits_{i=1}^{n} x_i^{[-b_{ik}]_+}\right)^{d_k}Z_k\left(\mathop{\prod}\limits_{i=1}^{n} x_i^{b_{ik}}\right)}{x_k} &\text{if $j=k$,}\\
x_j &\text{otherwise.}
\end{cases}
\end{align}
\item The exchange polynomials $\mathbf{Z'}=(Z'_1, \dots, Z'_n)$ are given by
\begin{align}\label{eq:Z-mutation}
Z'_j(u)=\begin{cases}u^{d_k}Z_k(u^{-1})  &\text{if $j=k$,}\\
Z_j(u) &\text{otherwise.}
\end{cases}
\end{align}
\end{itemize}

Let $\mathbb{T}_n$ be the \emph{$n$-regular tree} whose edges are labeled by the numbers $1, \dots, n$ such that the $n$ edges emanating from each vertex have different labels. We write
$\begin{xy}(0,0)*+{t}="A",(10,0)*+{t'}="B",\ar@{-}^k"A";"B" \end{xy}$
to indicate that vertices $t,t'\in \mathbb{T}_n$ are joined by an edge labeled by $k$. We fix an arbitrary vertex $t_0\in \TT_n$, which is called the \emph{rooted vertex}.
A \emph{generalized cluster pattern} is an assignment of a labeled seed $\Sigma_t=(\xx_t,B_t,\mathbf Z_t)$ to every vertex $t\in \mathbb{T}_n$ such that the labeled seeds $\Sigma_t$ and $\Sigma_{t'}$ assigned to the endpoints of any edge
$\begin{xy}(0,0)*+{t}="A",(10,0)*+{t'}="B",\ar@{-}^k"A";"B" \end{xy}$
are obtained from each other by the seed mutation in direction $k$. When the initial seed is $\Sigma_{t_0}=(\xx,B,\mathbf Z)$, we denote by $CP_{(\xx,B,\mathbf Z)}\colon t\mapsto \Sigma_t$ this assignment.
The degree $n$ of the regular tree $\TT_n$ is called the \emph{rank} of a generalized cluster pattern $CP_{(\xx,B,\mathbf Z)}$.
\begin{remark}
When $D=I_n$ (the identity matrix), a generalized cluster pattern coincides with a \emph{cluster pattern} defined in \cite{fziv}. 
\end{remark}
We will now see examples that give positive integer solutions to \eqref{Diophantine}. First, we consider the case of $k_1=k_2=k_3=0$. we set $B=\begin{bmatrix}0&2&-2\\-2&0&2\\2&-2&0\end{bmatrix}$ and $Z_1(u)=Z_2(u)=Z_3(u)=1+u$. For any $k\in\{1,2,3\}$, we have 
\begin{align*}
    \mu_k(\pm B)=\mp B, \quad\mu_k(\mathbf Z)=\mathbf Z.
\end{align*} Therefore, the mutation of cluster variables are 
\begin{align*}\label{eq:x-mutation2}
x'_j=\begin{cases}\dfrac{x_\ell^2(1+x_\ell^{-2}x_m^2)}{x_k}=\dfrac{x_\ell^2+x_m^2}{x_k} &\text{if $j=k$,}\\
x_j &\text{otherwise,}
\end{cases}
\end{align*}
where $\{k,\ell,m\}=\{1,2,3\}$.
This oparation coincides with the Vieta jumping of in the Markov Diophantine equation.

Next, we consider the case of $k_3\neq 0$ and $k_1=k_2=0$. we set $B=\begin{bmatrix}0&2&-1\\-2&0&1\\2&-2&0\end{bmatrix}$ and $Z_1(u)=Z_2(u)=1+u, Z_3(u)=1+k_3u+u^2$. Then, for any $k\in\{1,2,3\}$, we have 
\begin{align*}
    \mu_k(\pm B)=\mp B,\quad \mu_k(\mathbf Z)=\mathbf Z.
\end{align*}
Therefore, when $k=1,2$, the mutations of cluster variables are 
\begin{align*}
x'_j=\begin{cases}\dfrac{x_\ell^2(1+x_\ell^{-2}x_m^2)}{x_k}=\dfrac{x_\ell^2+x_m^2}{x_k} &\text{if $j=k$,}\\
x_j &\text{otherwise,}
\end{cases}
\end{align*}
where $\{k,\ell,m\}=\{1,2,3\}$. When $k=3$, we have
\begin{align*}
x'_j=\begin{cases}\dfrac{x_1^2(1+k_3x_1^{-1}x_2+x_1^{-2}x_2^{2})}{x_3}=\dfrac{x_1^2+k_3x_1x_2+x_2^2}{x_3} &\text{if $j=k=3$,}\\
x_j &\text{otherwise.}
\end{cases}
\end{align*}
This oparation coincides with the Vieta jumping in the equation
\[x^2+y^2+z^2+k_3xy=(3+k_3)xyz.\]
The Vieta jumping in each equations treated in this paper is described by the mutation of a generalized cluster pattern. See the following theorem:
\begin{theorem}\label{clusterstructure}
For each equation given in \eqref{Diophantine} and \eqref{Diophantine4}, $B$ and $\mathbf{Z}$ (and $D$) are set as in Table \ref{table}.
\begin{table}[ht]
\begin{center}
   \begin{equation*}
\begin{array}{|c|c|c|c|}
\hline
&&&\\[-4mm]
\mathrm{Equation} & B & \mathbf Z &D\\
\hline
&&&\\[-3mm]
x^2+y^2+z^2=3xyz &\begin{bmatrix}0&2&-2\\-2&0&2\\2&-2&0\end{bmatrix}& \begin{cases}Z_1(u)=1+u\\Z_2(u)=1+u\\Z_3(u)=1+u\end{cases}&\begin{bmatrix}1&0&0\\0&1&0\\0&0&1\end{bmatrix}\\[3mm]
\hline
&&&\\[-3mm]
x^2+y^2+z^2+k_3xy=(3+k_3)xyz&\begin{bmatrix}0&2&-1\\-2&0&1\\2&-2&0\end{bmatrix}&\begin{cases}Z_1(u)=1+u\\Z_2(u)=1+u\\Z_3(u)=1+k_3u+u^2\end{cases}&\begin{bmatrix}1&0&0\\0&1&0\\0&0&2\end{bmatrix}\\[1mm]
\hline
&&&\\[-3mm]
\begin{matrix}x^2+y^2+z^2+k_1yz+k_3xy\\=(3+k_1+k_3)xyz\end{matrix}&\begin{bmatrix}0&2&-1\\-1&0&1\\1&-2&0\end{bmatrix}& \begin{cases}Z_1(u)=1+k_1u+u^2\\Z_2(u)=1+u\\Z_3(u)=1+k_3u+u^2\end{cases} &\begin{bmatrix}2&0&0\\0&1&0\\0&0&2\end{bmatrix}\\[1mm]
\hline
&&&\\[-3mm]
\begin{matrix}x^2+y^2+z^2+k_1yz+k_2zx+k_3xy\\=(3+k_1+k_2+k_3)xyz\end{matrix}&\begin{bmatrix}0&1&-1\\-1&0&1\\1&-1&0\end{bmatrix}& \begin{cases}Z_1(u)=1+k_1u+u^2\\Z_2(u)=1+k_2u+u^2\\Z_3(u)=1+k_3u+u^2\end{cases}&\begin{bmatrix}2&0&0\\0&2&0\\0&0&2\end{bmatrix} \\[1mm]
\hline
&&&\\[-3mm]
x^2+y^4+z^4+2xy^2+2z^2x=7xy^2z^2&\begin{bmatrix}0&1&-1\\-4&0&2\\4&-2&0\end{bmatrix}& \begin{cases}Z_1(u)=1+u\\Z_2(u)=1+u\\Z_3(u)=1+u\end{cases}&\begin{bmatrix}1&0&0\\0&1&0\\0&0&1\end{bmatrix} \\[1mm]
\hline
&&&\\[-3mm]
\begin{matrix}
x^2+y^4+z^4+2xy^2+ky^2z^2+2z^2x\\=(7+k)xy^2z^2\end{matrix}&\begin{bmatrix}0&1&-1\\-2&0&2\\2&-2&0\end{bmatrix}& \begin{cases}Z_1(u)=1+ku+u^2\\Z_2(u)=1+u\\Z_3(u)=1+u\end{cases}&\begin{bmatrix}2&0&0\\0&1&0\\0&0&1\end{bmatrix}\\[1mm]
\hline
\end{array}
\end{equation*}
\caption{Equations and corresponding triplets $(B,\mathbf Z,D)$}\label{table}
\end{center}
\end{table}
Then, the generalized cluster pattern $CP_{(\xx,B,\mathbf Z)}$ with a substitution $x_1=x_2=x_3=1$ gives the tree $\mathbb T^{k_1,k_2,k_3}$ or $\TT^k$ giving positive integer solutions to the corresponding equation  (where we ignore exchange matrices in the cluster pattern and consider only clusters). 
\end{theorem}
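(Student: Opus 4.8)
The plan is to verify, for each of the six rows of Table \ref{table}, two independent facts: first, that the chosen pair $(B,\mathbf Z)$ (with the associated $D$) is \emph{mutation-invariant} in the strong sense that every mutation sends $B$ to $\pm B$ (equivalently $-B$, since $B$ is skew-symmetric here and $\mu_k(B)$ differs from $B$ only by the sign flip in row/column $k$, so one only needs to check that the ``otherwise'' entries $b_{ij}+d_k([b_{ik}]_+b_{kj}+b_{ik}[-b_{kj}]_+)$ coincide with $-b_{ij}$) and sends $\mathbf Z$ to itself (which, by \eqref{eq:Z-mutation}, just says each $Z_i$ is palindromic, i.e. $z_{i,j}=z_{i,d_i-j}$ — immediate from the table); second, that under the specialization $x_1=x_2=x_3=1$ the cluster-variable mutation formula \eqref{eq:x-mutation} for $\mu_k$ reproduces exactly the $k$-th Vieta jumping appearing in the generation rule of $\TT^{k_1,k_2,k_3}$ (resp. $\TT^k$). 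Granting these two facts, the $3$-regular tree $\TT_3$ carrying the cluster pattern maps onto the tree of triplets: the root seed at $t_0$ carries $(1,1,1)$, and by Remark \ref{remark-adjacent} together with Theorem \ref{Diophantinetheorem} (resp. Theorem \ref{Diophantinetheorem2}) the three neighbours of any vertex are obtained by the three Vieta jumpings, which is precisely what the three mutations $\mu_1,\mu_2,\mu_3$ do to the cluster; so the image of $CP_{(\xx,B,\mathbf Z)}$ under $x_1=x_2=x_3=1$ is $\TT^{k_1,k_2,k_3}$ (resp. $\TT^k$).

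Concretely I would proceed row by row. For the matrix part, fix $k\in\{1,2,3\}$, read off $d_k$ from $D$, and compute the three (or fewer, by skew-symmetry) off-diagonal updated entries; in every row of the table the products $d_k\,b_{ik}b_{kj}$ are arranged so that the update exactly doubles $b_{ij}$ in magnitude with opposite sign, e.g. for the last row $B=\left[\begin{smallmatrix}0&1&-1\\-2&0&2\\2&-2&0\end{smallmatrix}\right]$, $D=\mathrm{diag}(2,1,1)$: mutating at $k=1$ one checks $b_{23}+d_1([b_{21}]_+b_{13}+b_{21}[-b_{13}]_+)=2+2(0\cdot(-1)+(-2)\cdot 1)=-2=-b_{23}$, and similarly at $k=2,3$. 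For the cluster-variable part, again fix $k$ and substitute the monomial $\prod_i x_i^{b_{ik}}$ into $Z_k$; the numerator $\bigl(\prod_i x_i^{[-b_{ik}]_+}\bigr)^{d_k}Z_k(\prod_i x_i^{b_{ik}})$ must, after clearing, become the polynomial numerator of the Vieta jump (e.g. $x_\ell^2+k\,x_\ell x_m+x_m^2$ or $(x_\ell+x_m^2)^2$ etc.), and specializing all $x_i$ to $1$ turns $x_k'=(\text{numerator})/x_k$ into the stated arithmetic operation on the triplet. This is a finite, mechanical check for $6\times 3=18$ instances; many of the instances (Markov; $k_2=k_3=0$) are already displayed in the text preceding the theorem, so only the remaining rows need to be written out.

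The one genuinely non-mechanical point — and the step I expect to be the main obstacle — is bookkeeping the \emph{identification of the labelled tree $\TT_3$ with the unlabelled generation tree}: the generation rule (i)--(iii) for $\TT^{k_1,k_2,k_3}$ branches according to which coordinate is maximal, whereas $CP_{(\xx,B,\mathbf Z)}$ branches uniformly in the three directions $1,2,3$; one must argue that, starting from $t_0$ with cluster $(1,1,1)$ and applying the three mutations, the resulting tree of \emph{clusters} (forgetting the labels and the exchange matrices, as the statement explicitly allows) is isomorphic as an unlabelled graph-with-triplet-labels to $\TT^{k_1,k_2,k_3}$. This follows from Remark \ref{remark-adjacent}: each vertex of $\TT^{k_1,k_2,k_3}$ has exactly three neighbours, one for each coordinate being replaced, and the three Vieta jumpings are mutually inverse involutions connecting a triplet to exactly those three neighbours — so the assignment ``cluster $\mapsto$ triplet'' intertwines the $\TT_3$-structure with the tree structure of $\TT^{k_1,k_2,k_3}$, and since both are trees rooted at $(1,1,1)$ the map is an isomorphism onto the set of triplets. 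For \eqref{Diophantine4} the same argument applies using the analysis of $\TT^k$ inside the proof of Theorem \ref{Diophantinetheorem2}. I would close by remarking that the exchange matrices genuinely differ along the tree only by global sign (they take two values $\pm B$), which is why it is harmless — indeed necessary — to ignore them when reading off the triplets.
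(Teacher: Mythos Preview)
Your proposal is correct and follows essentially the same approach as the paper, which in the discussion preceding the theorem verifies $\mu_k(\pm B)=\mp B$, $\mu_k(\mathbf Z)=\mathbf Z$, and that the cluster-variable mutation reproduces the Vieta jumping (explicitly for the first two rows of Table \ref{table}, leaving the remaining mechanical checks implicit); your tree-identification step via Remark \ref{remark-adjacent} makes precise something the paper leaves tacit. One minor slip: not every $B$ in Table \ref{table} is skew-symmetric (only skew-symmetrizable), so your parenthetical ``or fewer, by skew-symmetry'' shortcut does not always apply, though this does not affect the argument.
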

This theorem allows us to say that the Vieta jumpings and positive integer solutions of equations \eqref{Diophantine} and \eqref{Diophantine4} have the structures of a generalized cluster patterns.

\begin{remark}
Of the equations in Table \ref{table}, the generalized cluster patterns corresponding to equations other than the Markov equation \eqref{Diophantine2} and Gyoda's equation \eqref{Diophantine3} are not treated in \cites{chsh,nak15,nakrup}. However, it can be seen that these patterns also satisfy the same properties as the generalized cluster patterns for the classes treated in these papers. See \cite{nak24} for details.    
\end{remark}

\section{Questions and consideration of class of rank 2}
The cluster patterns corresponding to the equations listed in Table \ref{table} satisfy the following two conditions:
\begin{condition}\label{condition}\indent
\begin{itemize}
    \item [(1)]The exchange matrix is multiplied by $-1$ for a mutation in any direction,
    \item [(2)]the exchange polynomials are mutation invariant.
\end{itemize}
\end{condition}
Moreover, these six triplets $(B,\mathbf Z,D)$ in the table in Theorem \ref{clusterstructure} can be divided into two types: for the top four in the Table \ref{table}, $BD=\begin{bmatrix}0&2&-2\\-2&0&2\\2&-2&0\end{bmatrix}$ is satisfied, and for the remaining two, $BD=\begin{bmatrix}0&1&-1\\-4&0&2\\4&-2&0\end{bmatrix}$ is satisfied.
At present, we know a cluster pattern that satisfies these two conditions and $BD=\begin{bmatrix}0&1&-1\\-4&0&2\\4&-2&0\end{bmatrix}$ but for which no corresponding equation has been found. It is the cluster pattern determined by 
\begin{align}\label{expectedpattern}
    B=\begin{bmatrix}0&1&-1\\-1&0&2\\1&-2&0\end{bmatrix} \text, \begin{cases}Z_1(u)=1+k_3u+k_1u^2+k_3u^3+u^4,\\Z_2(u)=1+u,\\Z_3(u)=1+u,\end{cases} D=\begin{bmatrix}4&0&0\\0&1&0\\0&0&1\end{bmatrix}.
    \end{align}
Therefore, the following question can be considered.

\begin{question}
\noindent
Is there a Diophantine equation corresponding to \eqref{expectedpattern}?
\end{question}
As a more general question, the following problems are considered.
\begin{question}\label{question}\noindent
\begin{itemize}
\item[(1)] What kind of a triplet $(B,\mathbf Z,D)$ satisfying the two condition in Condition \ref{condition} such that $BD$ is neither $\begin{bmatrix}0&2&-2\\-2&0&2\\2&-2&0\end{bmatrix}$ nor $\begin{bmatrix}0&1&-1\\-4&0&2\\4&-2&0\end{bmatrix}$?
\item[(2)] Is there a general way to construct a Diophantine equation from information in $(B,\mathbf Z,D)$?
\end{itemize}
\end{question}
We will now consider Question \ref{question}. All of cluster patterns of rank 2 satisfy Condition \ref{condition} (1). Therefore, there are infinitely many cluster patterns of rank 2 which is the answer to Question \ref{question} (1). In this class, there are cluster patterns whose corresponding equations are derived from Theorem \ref{clusterstructure}. Since all cluster patterns treated in Theorem 4.2 are of rank 3, each seed can be mutated in three directions. We consider prohibiting mutaions in one of these directions. By substituting 1 to the cluster variable corresponding to the direction in which the mutation was prohibited, the cluster pattern that was originally rank 3 can be viewed as that of rank 2. In this case, the exchange matrix corresponding to the cluster pattern is a submatrix of the original one that removes the row and column corresponding to the direction in which mutation is prohibited. The equation corresponding to this cluster pattern is the equation that substitute $1$ to the variable corresponding to the direction in which mutation is prohibited. Therefore, the following theorem holds.
\begin{theorem}\label{clusterstructurerank2}
We set equations, $B$ and $\mathbf{Z}$ (and $D$) as in Table \ref{table2}.
\begin{table}[ht]
   \begin{equation*}
\begin{array}{|c|c|c|c|}
\hline
&&&\\[-4mm]
\mathrm{Equation} & B & \mathbf Z &D\\
\hline
&&&\\[-3mm]
x^2+y^2+1=3xy &\begin{bmatrix}0&2\\-2&0\end{bmatrix}& \begin{cases}Z_1(u)=1+u\\Z_2(u)=1+u\end{cases}&\begin{bmatrix}1&0\\0&1\end{bmatrix}\\[3mm]
\hline
&&&\\[-3mm]
x^2+y^2+k_3x+1=(3+k_3)xy&\begin{bmatrix}0&1\\-2&0\end{bmatrix}&\begin{cases}Z_1(u)=1+u\\Z_2(u)=1+k_3u+u^2\end{cases}&\begin{bmatrix}1&0\\0&2\end{bmatrix}\\[1mm]
\hline
&&&\\[-3mm]
x^2+y^2+k_3x+k_1y+1=(3+k_3+k_1)xy&\begin{bmatrix}0&1\\-1&0\end{bmatrix}& \begin{cases}Z_1(u)=1+k_1u+u^2\\Z_2(u)=1+k_3u+u^2\end{cases} &\begin{bmatrix}2&0\\0&2\end{bmatrix}\\[1mm]
\hline
&&&\\[-3mm]
x^2+y^4+2x+1=5xy^2&\begin{bmatrix}0&1\\-4&0\end{bmatrix}& \begin{cases}Z_1(u)=1+u\\Z_2(u)=1+u\end{cases}&\begin{bmatrix}1&0\\0&1\end{bmatrix} \\[1mm]
\hline
&&&\\[-3mm]
x^2+y^4+ky^2+2x+1=(5+k)xy^2&\begin{bmatrix}0&1\\-2&0\end{bmatrix}& \begin{cases}Z_1(u)=1+ku+u^2\\Z_2(u)=1+u\end{cases}&\begin{bmatrix}2&0\\0&1\end{bmatrix}\\[1mm]
\hline
\end{array}
\end{equation*}
\caption{Equations and corresponding triplets $(B,\mathbf Z,D)$}\label{table2}
\end{table}
Then, the generalized cluster pattern $CP_{(\xx,B,\mathbf Z)}$ with a substitution $x_1=x_2=1$ gives the tree giving all positive integer solutions to the corresponding equation (where we ignore exchange matrices in the cluster pattern and consider only clusters). 
\end{theorem}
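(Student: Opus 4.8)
The plan is to reduce Theorem \ref{clusterstructurerank2} to Theorem \ref{clusterstructure} by the "freeze a direction" mechanism already sketched in the paragraph preceding the statement. First I would make that mechanism precise as a lemma: given a generalized cluster pattern $CP_{(\xx,B,\mathbf Z)}$ of rank $3$ and a fixed index $m\in\{1,2,3\}$, consider only those vertices of $\TT_3$ reachable from $t_0$ using edges labeled by the two indices $\{1,2,3\}\setminus\{m\}$; this subgraph is a copy of $\TT_2$. Along it the cluster variable $x_m$ is never mutated, so it stays equal to its initial value, and (since the paper's seeds carry no coefficients) the restricted exchange matrices are exactly the $2\times 2$ principal submatrices of the $B_t$ obtained by deleting row and column $m$, while the restricted exchange polynomials are the two surviving $Z_i$. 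The key point to check is that the rank-$2$ mutation formulas \eqref{eq:matrix-mutation}, \eqref{eq:x-mutation}, \eqref{eq:Z-mutation} applied to this $2\times2$ data agree with the restriction of the rank-$3$ formulas: for the matrix this is immediate because deleting row/column $m$ commutes with $\mu_k$ for $k\neq m$ (the correction term $d_k([b_{ik}]_+b_{kj}+b_{ik}[-b_{kj}]_+)$ only involves surviving indices), and for the cluster variable $x_k'$ the product $\prod_i x_i^{\cdot}$ over $i\in\{1,2,3\}$ splits off the factor $x_m^{\cdot}$, which becomes $1$ after the substitution $x_m=1$, leaving exactly the rank-$2$ expression. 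So after substituting $x_m=1$, the restricted pattern is literally the rank-$2$ generalized cluster pattern for the $2\times2$ data.

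Next I would match this $2\times2$ data against Table \ref{table2}, row by row, against the corresponding row of Table \ref{table}. Concretely: freezing direction $m=2$ in the rank-$3$ Markov data gives $B=\begin{bmatrix}0&2\\-2&0\end{bmatrix}$, $Z_1=Z_3=1+u$ — which is the first row of Table \ref{table2} up to relabeling $3\mapsto 2$; freezing direction $2$ in the $k_1$-row of Table \ref{table} gives $B=\begin{bmatrix}0&1\\-2&0\end{bmatrix}$, $Z_1=1+u$, $Z_3=1+k_1u+u^2$, $D=\mathrm{diag}(1,2)$ — the second row of Table \ref{table2}; freezing direction $3$ in the three-parameter row gives the third row; and freezing direction $3$ in the last two rows of Table \ref{table} (involving $y^4,z^4$) gives the fourth and fifth rows of Table \ref{table2}. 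In each case I would verify Condition \ref{condition} is preserved under restriction (it is, trivially, since $\mu_k(\pm B)=\mp B$ and $\mu_k(\mathbf Z)=\mathbf Z$ for $k\neq m$ pass to principal submatrices), and then read off the rank-$2$ "Vieta jumping" $(x_k,x_\ell)\mapsto\bigl((\text{numerator})/x_k,\ x_\ell\bigr)$ from \eqref{eq:x-mutation}, exactly as the rank-$3$ computation was done in the examples before Theorem \ref{clusterstructure}.

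It then remains to show that, after substituting $x_1=x_2=1$, the tree of clusters produced by each rank-$2$ pattern in Table \ref{table2} is precisely the tree of all positive integer solutions to the displayed equation. Here I would argue as in the proofs of Theorems \ref{Diophantinetheorem} and \ref{Diophantinetheorem2}: the equation $E'(x,y)$ in a given row of Table \ref{table2} is obtained from the equation $E(x,y,z)$ in the matching row of Table \ref{table} by setting one of the three variables equal to $1$ (e.g. $z=1$ turns $x^2+y^2+z^2+k_1xy+k_2yz+k_3zx=(3+k_1+k_2+k_3)xyz$ into $x^2+y^2+1+k_1xy+k_2y+k_3x=(3+k_1+k_2+k_3)xy$, i.e. the third row with the roles of $k_2,k_3$ matched appropriately). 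So a positive integer solution $(x,y)$ of $E'$ is the same thing as a positive integer solution $(x,y,1)$ of $E$ with third coordinate $1$; but by Theorem \ref{Diophantinetheorem} the solutions of $E$ with a coordinate equal to $1$ are exactly the vertices of the subtree of $\TT^{k_1,k_2,k_3}$ spanned by never performing the third Vieta jumping, which by the lemma above is the tree of clusters of the frozen rank-$2$ pattern. The only real care needed is bookkeeping: checking that in each row the "$1$" really can be taken as the frozen coordinate (i.e. that the children-generation rule of Theorem \ref{Diophantinetheorem2} or \ref{Diophantinetheorem}, restricted to solutions with that coordinate $=1$, stays within such solutions and enumerates them without repetition), and that the index relabeling between the two tables is done consistently. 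That bookkeeping — lining up which direction is frozen with which variable is set to $1$, across all five rows — is the main, if modest, obstacle; the underlying mathematics is entirely contained in the rank-$3$ theorems and the restriction lemma.
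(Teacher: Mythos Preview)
Your proposal is correct and follows essentially the same route as the paper: the paper's ``proof'' is the paragraph immediately preceding the statement, which derives each row of Table~\ref{table2} from the corresponding row of Table~\ref{table} by prohibiting mutation in one direction and substituting $1$ for the associated variable, exactly the ``freeze a direction'' mechanism you spell out. Your version is more careful about the bookkeeping (matching submatrices and exchange polynomials, and arguing that solutions with one coordinate equal to $1$ are precisely those reached without the corresponding Vieta jumping), but the underlying idea is identical.
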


Apart from the cluster pattern induced by Theorem \ref{clusterstructure}, we give the equation induced by the cluster pattern of type $A_2$, i.e. the cluster pattern determined by
\begin{align}
    B=\begin{bmatrix}0&1\\-1&0\end{bmatrix} \text, \begin{cases}Z_1(u)=1+u,\\Z_2(u)=1+u,\end{cases}D=\begin{bmatrix}1&0\\0&1\end{bmatrix}.
    \end{align}
This cluster pattern differs from the ones we have considered above in that it has finitely many cluster variables. By computing the cluster variable according to the mutation rule
\[\mu_1(x_1,x_2)=\left(\dfrac{x_2+1}{x_1},x_2\right) \text{ and }\mu_2(x_1,x_2)=\left(x_1,\dfrac{x_1+1}{x_2}\right),\]
 we see that clusters in the cluster pattern of type $A_2$ are
\begin{align}
    &(x_1,x_2),\left(x_1,\dfrac{x_1+1}{x_2}\right),\left(\dfrac{x_1+x_2+1}{x_1x_2},\dfrac{x_1+1}{x_2}\right),\left(\dfrac{x_1+x_2+1}{x_1x_2},\dfrac{x_2+1}{x_1}\right), \left(x_2,\dfrac{x_2+1}{x_1}\right),\label{clusterA2}\\
    &(x_2,x_1),\left(\dfrac{x_1+1}{x_2},x_1\right),\left(\dfrac{x_1+1}{x_2},\dfrac{x_1+x_2+1}{x_1x_2}\right),\left(\dfrac{x_2+1}{x_1},\dfrac{x_1+x_2+1}{x_1x_2}\right), \left(\dfrac{x_2+1}{x_1},x_2\right)\nonumber
    \end{align}
in total. Therefore, we want to find an equation such that the five pairs\[(1,1),(1,2),(2,1),(2,3),(3,2)\] where $1$ is substituted for $x_1$ and $x_2$ of \eqref{clusterA2}, are all positive integer solutions.

The set consisting of cluster variables
\[\left\{x_1,x_2,\dfrac{x_2+1}{x_1},\dfrac{x_1+x_2+1}{x_1x_2},\dfrac{x_1+1}{x_2}\right\}\]
appearing in \eqref{clusterA2} (it is called the \emph{Lyness 5-cycle}) is invariant by the substitutions
\begin{align*}
 \left(x_1,x_2\right)\mapsto\left(\dfrac{x_2+1}{x_1},x_2\right)\text{ and } \left(x_1,x_2\right)\mapsto\left(x_1,\dfrac{x_1+1}{x_2}\right).
 \end{align*}
Therefore, we have the following proposition.
\begin{proposition}\label{symmetricpolynomial}
Let $f(a_1,a_2,a_3,a_4,a_5)$ be a symmetric polynomial of five variables. Then, the equation
\begin{align}\label{5variableequation}
f\left(x,y,\dfrac{y+1}{x},\dfrac{x+y+1}{xy},\dfrac{x+1}{y}\right)=f\left(x,y,\dfrac{y+1}{x},\dfrac{x+y+1}{xy},\dfrac{x+1}{y}\right)\Bigg|_{x=y=1}
\end{align}
has positive integer solutions
\[(x,y)=(1,1),(1,2),(2,1),(2,3),(3,2).\]
\end{proposition}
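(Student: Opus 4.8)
The plan is to deduce the proposition directly from the mutation-invariance of the set of type $A_2$ cluster variables recorded just above the statement. Set
\[
g(x,y):=f\!\left(x,\,y,\,\frac{y+1}{x},\,\frac{x+y+1}{xy},\,\frac{x+1}{y}\right),
\]
a rational function of $x$ and $y$; then \eqref{5variableequation} asks precisely that $g(x,y)=g(1,1)$ at each of the five listed points, and since $(1,1)$ is trivially a solution, the content lies in the other four. Introduce the two birational substitutions
\[
\sigma_1\colon(x,y)\mapsto\left(\tfrac{y+1}{x},\,y\right),\qquad\sigma_2\colon(x,y)\mapsto\left(x,\,\tfrac{x+1}{y}\right),
\]
which are the specializations of the mutations $\mu_1$, $\mu_2$ appearing in \eqref{clusterA2}.

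First I would check that $\sigma_1$ and $\sigma_2$ each permute the five-element set $\left\{x_1,\,x_2,\,\frac{x_2+1}{x_1},\,\frac{x_1+x_2+1}{x_1x_2},\,\frac{x_1+1}{x_2}\right\}$ (with $x_1=x$, $x_2=y$): a short substitution shows that $\sigma_1$ fixes $x_2$, interchanges $x_1$ with $\frac{x_2+1}{x_1}$, and interchanges $\frac{x_1+x_2+1}{x_1x_2}$ with $\frac{x_1+1}{x_2}$, while $\sigma_2$ fixes $x_1$, interchanges $x_2$ with $\frac{x_1+1}{x_2}$, and interchanges $\frac{x_2+1}{x_1}$ with $\frac{x_1+x_2+1}{x_1x_2}$. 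This is exactly the invariance noted before the proposition. Because $f$ is a symmetric polynomial, it follows that $g\circ\sigma_1=g$ and $g\circ\sigma_2=g$ as identities of rational functions.

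Next I would follow the five points along these substitutions. Starting from $(x,y)=(1,1)$ one computes
\[
\sigma_1(1,1)=(2,1),\qquad \sigma_2(1,1)=(1,2),\qquad \sigma_2(2,1)=(2,3),\qquad \sigma_1(1,2)=(3,2),
\]
so all five points are positive integer pairs, and at each of them the denominators $x$, $y$, $xy$ occurring in $g$ are nonzero, hence $g$ is defined there. Evaluating the identities $g\circ\sigma_i=g$ along this chain yields $g(1,1)=g(2,1)=g(1,2)=g(2,3)=g(3,2)$, which is exactly the assertion that $(1,1),(2,1),(1,2),(2,3),(3,2)$ are positive integer solutions of \eqref{5variableequation}.

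Every step is elementary, so I do not expect a genuine obstacle; the only places needing a little care are verifying that $\sigma_1$ and $\sigma_2$ really do permute the five cluster variables (so that the symmetry of $f$ may be invoked) and checking that the intermediate pairs are honest positive integers, so that one is evaluating the cluster pattern inside its domain. If one prefers, the structural argument can be replaced by a purely computational one: a direct evaluation shows that at each of the five points the tuple $\left(x,\,y,\,\frac{y+1}{x},\,\frac{x+y+1}{xy},\,\frac{x+1}{y}\right)$ is a rearrangement of $(1,1,2,2,3)$, so $g$ takes the common value $f(1,1,2,2,3)$ at all of them.
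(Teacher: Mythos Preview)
Your argument is correct and is essentially the same as the paper's: the paper records, immediately before the proposition, that the five-element set of cluster variables is invariant under the two substitutions $\sigma_1,\sigma_2$, and deduces the proposition from this together with the symmetry of $f$. Your write-up simply spells out this deduction in detail (including the explicit permutations and the chain $(1,1)\to(2,1)\to(2,3)$, $(1,1)\to(1,2)\to(3,2)$), and your final remark that each of the five points yields a rearrangement of $(1,1,2,2,3)$ is a harmless alternative verification.
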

Furthermore, the following proposition also holds:
\begin{proposition}\label{allpositivesolutionsA2}
If $f(a_1,a_2,a_3,a_4,a_5)=a_1+a_2+a_3+a_4+a_5$ in Proposition \ref{symmetricpolynomial}, then all positive integer solutions to
\begin{align}\label{A2equation}
    x^2+y^2+2x+2y+x^2y+xy^2+1=9xy,
\end{align}
which is corresponding equation to \eqref{5variableequation}, are \[(x,y)=(1,1),(1,2),(2,1),(2,3),(3,2).\]
\end{proposition}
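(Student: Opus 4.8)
The plan is to establish the two inclusions separately. That the five pairs $(1,1),(1,2),(2,1),(2,3),(3,2)$ are positive integer solutions of \eqref{A2equation} is immediate from Proposition \ref{symmetricpolynomial} applied to the symmetric polynomial $f(a_1,\dots,a_5)=a_1+a_2+a_3+a_4+a_5$ (it is also an easy direct check). So the content is the reverse inclusion: every positive integer solution of \eqref{A2equation} is one of these five.

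For the reverse inclusion I would first note that \eqref{A2equation} is symmetric in $x$ and $y$, so for a positive integer solution $(a,b)$ we may assume $a\ge b$. If a prime $p$ divided both $a$ and $b$, reducing \eqref{A2equation} modulo $p$ would give $1\equiv 0$; hence $\gcd(a,b)=1$, and in particular $a=b$ forces $(a,b)=(1,1)$. It remains to handle $a>b\ge 1$. Here I would view \eqref{A2equation} as a quadratic in $x$,
\[
(b+1)x^{2}+(b^{2}-9b+2)x+(b+1)^{2}=0,
\]
and put $x=a$. Since the leading and constant coefficients $b+1$ and $(b+1)^{2}$ are positive and $a>0$, the linear coefficient must be negative, that is $a(b+1)<9b-b^{2}-2$. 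As $a$ and $b$ are integers with $a>b$ we have $a\ge b+1$, so $(b+1)^{2}\le a(b+1)<9b-b^{2}-2$, which rearranges to $(2b-1)(b-3)<0$; since $2b-1>0$ for $b\ge 1$, this forces $b\in\{1,2\}$. Substituting $b=1$ into \eqref{A2equation} yields $2a^{2}-6a+4=0$, that is $(a-1)(a-2)=0$, so $a=2$; substituting $b=2$ yields $3a^{2}-12a+9=0$, that is $(a-1)(a-3)=0$, so $a=3$. This gives the solutions $(2,1)$ and $(3,2)$, and by the symmetry $x\leftrightarrow y$ the case $a<b$ gives $(1,2)$ and $(2,3)$. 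Together with $(1,1)$ these exhaust the positive integer solutions, which proves the proposition.

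I do not expect a serious obstacle here: the argument is elementary, the only slightly delicate point being that once the inequality has pinned $b$ down to $\{1,2\}$ one must actually re-solve \eqref{A2equation} for $a$, rather than trust the inequality alone. An alternative, more cluster-theoretic route would run a Vieta-jumping descent along the exchange pentagon of the type $A_2$ pattern \eqref{clusterA2}, replacing $(a,b)$ by $\bigl(\tfrac{b+1}{a},b\bigr)$ when $a=\max(a,b)$ and by $\bigl(a,\tfrac{a+1}{b}\bigr)$ when $b=\max(a,b)$, so that $\max(a,b)$ strictly decreases until $(1,1)$ is reached; this also works, but it requires separately checking that each jumped pair is again a pair of positive integers, which is less direct than the bound above, so I would present the latter.
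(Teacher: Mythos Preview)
Your proof is correct and takes a genuinely different route from the paper's. The paper sets $g(x,y)=\text{LHS}-\text{RHS}$ and argues analytically: it shows $\partial g/\partial x>0$ and $\partial g/\partial y>0$ on the region $x>y\ge 3$, and since $g(4,3)=16>0$ concludes $g>0$ there (and symmetrically for $y>x\ge 3$), after which only a short list of small pairs remains to be inspected by hand. You instead exploit the quadratic-in-$x$ structure: from $(b+1)a^{2}+(b^{2}-9b+2)a+(b+1)^{2}=0$ with $a>0$ you extract $(b+1)a<9b-b^{2}-2$, and combining with $a\ge b+1$ gives the clean factorisation $(2b-1)(b-3)<0$, pinning $b$ to $\{1,2\}$ directly. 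This is more elementary (no calculus), and it bounds both coordinates simultaneously; the paper's monotonicity argument only eliminates the region $\min(x,y)\ge 3$ and must still treat the strips $b=1,2$ with $a$ large separately, a step the paper handles somewhat tersely. One small expository point: your sentence ``the linear coefficient must be negative, that is $a(b+1)<9b-b^{2}-2$'' conflates two facts---negativity of $b^{2}-9b+2$ is weaker than the displayed inequality, which actually follows by dropping the positive constant term $(b+1)^{2}$ from the equation (or equivalently by noting that the second root $(b+1)/a$ is positive, so $a$ is strictly less than the sum of the roots). The inequality is correct; just make its derivation explicit.
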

\begin{proof}
We set 
\[g(x,y)=x^2+y^2+2x+2y+x^2y+xy^2+1-9xy.\]
It suffice to show that positive integer pairs $(x,y)$ satisfying $g(x,y)=0$ are given by 
\[(x,y)=(1,1),(1,2),(2,1),(2,3),(3,2).\]
First, we will consider the case of $x\geq y$. If $x=y$, then we can see that the positive solution to $g(x,y)=0$ is only $(x,y)=(1.1)$ immediately. We assume that $x>y$. By considering the partial derivative of $g$ in $x$ direction, we have
\[\dfrac{\partial g}{\partial x}=y^2+2xy+2x+2-9y.\]
By using $x>y$, we have
\[\dfrac{\partial g}{\partial x}(x,y)>3y^2-7y+2.\]
Therefore, if $x>y\geq 3$, then we have $\dfrac{\partial g}{\partial x}(x,y)>0$. On the other hand, by considering the partial derivative of $g$ in $x$ direction, we have
\[\dfrac{\partial g}{\partial y}=x^2+2xy+2y+2-9x=\left(x-\dfrac{9}{2}\right)^2+2xy+2x-\dfrac{73}{4}.\]
Therefore, if $x>y\geq 3$, then we have $\dfrac{\partial g}{\partial y}(x,y)>0$. Now, since $g(4,3)=16>0$, we have $g(x,y)>0$ when $x$ and $y$ are integer and $x>y\geq 3$. Second, we will consider the case of $y> x$. By symmetry of $g(x,y)$ for $x$ and $y$, we have $g(x,y)>0$ if $x$ and $y$ are integer and $y>x\geq 3$. Therefore, the only possible pairs of integers that satisfy $g(x,y)=0$ are \[(1,1),(1,2),(1,3),(2,1),(2,3),(3,1),(3,2).\]
Of these, $g(3,1)=g(1,3)=4$, thus $(x,y)=(1,3),(3,1)$ are not solutions. The other five are all solutions according to Proposition \ref{symmetricpolynomial}.
\end{proof}
\begin{remark}
In Proposition \ref{allpositivesolutionsA2}, even though $f(a_1,a_2,a_3,a_4,a_5)=a_1a_2a_3a_4a_5$, the equation corresponding to \eqref{5variableequation} is the same as \eqref{A2equation}.
\end{remark}
From the above, the cluster pattern of rank 2 for which the corresponding equation is known is given in Table \ref{table3}.
\begin{table}[ht]
   \begin{equation*}
\begin{array}{|c|c|c|c|}
\hline
&&&\\[-4mm]
\mathrm{Equation} & B & \mathbf Z &D\\
\hline
&&&\\[-3mm]
x^2+y^2+2x+2y+x^2y+xy^2+1=9xy &\begin{bmatrix}0&1\\-1&0\end{bmatrix}& \begin{cases}Z_1(u)=1+u\\Z_2(u)=1+u\end{cases}&\begin{bmatrix}1&0\\0&1\end{bmatrix}\\[3mm]
\hline
&&&\\[-3mm]
x^2+y^2++k_1y+k_3x+1=(3+k_1+k_3)xy&\begin{bmatrix}0&1\\-1&0\end{bmatrix}& \begin{cases}Z_1(u)=1+k_1u+u^2\\Z_2(u)=1+k_3u+u^2\end{cases} &\begin{bmatrix}2&0\\0&2\end{bmatrix}\\[1mm]
\hline
&&&\\[-3mm]
x^2+y^2+k_3x+1=(3+k_3)xy&\begin{bmatrix}0&1\\-2&0\end{bmatrix}&\begin{cases}Z_1(u)=1+u\\Z_2(u)=1+k_3u+u^2\end{cases}&\begin{bmatrix}1&0\\0&2\end{bmatrix}\\[1mm]
\hline
&&&\\[-3mm]
x^2+y^4+ky^2+2x+1=(5+k)xy^2&\begin{bmatrix}0&1\\-2&0\end{bmatrix}& \begin{cases}Z_1(u)=1+ku+u^2\\Z_2(u)=1+u\end{cases}&\begin{bmatrix}2&0\\0&1\end{bmatrix}\\[1mm]
\hline
&&&\\[-3mm]
x^2+y^4+2x+1=5xy^2&\begin{bmatrix}0&1\\-4&0\end{bmatrix}& \begin{cases}Z_1(u)=1+u\\Z_2(u)=1+u\end{cases}&\begin{bmatrix}1&0\\0&1\end{bmatrix} \\[1mm]
\hline
&&&\\[-3mm]
x^2+y^2+1=3xy &\begin{bmatrix}0&2\\-2&0\end{bmatrix}& \begin{cases}Z_1(u)=1+u\\Z_2(u)=1+u\end{cases}&\begin{bmatrix}1&0\\0&1\end{bmatrix}\\[3mm]
\hline
\end{array}
\end{equation*}
\caption{Equations and corresponding triplets $(B,\mathbf Z,D)$}\label{table3}
\end{table}
In order to find the answer to Question \ref{question} (2) about the cluster patterns of rank 2, the first thing to do is to consider the following question:
\begin{question}
Are there any laws between the triplets $(B,\mathbf Z,D)$  and the equations given in Table \ref{table3}?
\end{question}
\bibliography{myrefs}
\end{document}